\theoremstyle{theorem}
\newtheorem {theo}{Theorem}[section]
\newtheorem*{theo*}{Theorem}
\newtheorem {lemme}[theo]{Lemma}
\newtheorem*{lemme*}{Lemma}
\newtheorem {prop}[theo]{Proposition}
\newtheorem*{prop*}{Proposition}
\newtheorem {cor}[theo]{Corollary}
\newtheorem*{cor*}{Corollary}
\newtheorem*{cor_proof*}{Corollary (of the proof)}
\newtheorem*{conjecture*}{Conjecture}
\theoremstyle{definition}
\newtheorem {defi}[theo]{Definition}
\newtheorem*{defi*}{Definition}
\newtheorem*{nota*}{Notation}
\theoremstyle{remark}
\newtheorem {remarque}[theo]{Remark}
\newtheorem*{remarque*}{Remark}
\newtheorem*{warning*}{Warning}
\newtheorem*{remarques*}{Remarks}
\newtheorem*{warnings*}{Warnings}
\newtheorem*{convention*}{Convention}
\newtheorem*{exemple*}{Example}
\newtheorem*{exemples*}{Examples}
\newtheorem {fundex}[theo]{Fundamental Example}
\newtheorem*{question*}{Question}
\newtheorem*{questions*}{Questions}
\newtheorem*{fact*}{Fact}
\newtheorem*{acknowledgments}{Acknowledgments}
\def\DD{{\mathcal D}}
\def\N{{\mathds N}}
\def\R{{\mathds R}}
\def\Z{{\mathds Z}}
\def\p{\partial}
\def\F{\textrm{F}}
\def\Fn{\F_n}
\def\rPS{\textrm{r}\textrm{SL}}
\def\RF{\textrm{RF}}
\def\RFn{\RF_n}
\def\Tube{\textrm{Tube}}
\def\wSL{\textrm{w}\mathcal{SL}}
\def\tDD{\widetilde{\DD}}
\def\Cl{\textrm{Cl}}
\definecolor{purple}{rgb}{0.63, 0.36, 0.94}
\newcommand{\mvdir}[1]{(\textrm{#1})_\rightarrow}
\newcommand{\mvind}[1]{(\textrm{#1})_\leftarrow}
\newcommand{\mvdirind}[1]{(\textrm{#1})_\leftrightarrow}
\newcommand{\fract}[2]{\hbox{\leavevmode
  \kern.1em \raise .25ex \hbox{\the\scriptfont0 $#1$}\kern-.1em }\big/
  {\hbox{\kern-.15em \lower .5ex \hbox{\the\scriptfont0 $#2$}}}}
\begin{document}

\title{On codimension two embeddings up to link-homotopy}
\author[B. Audoux]{Benjamin Audoux}
         \address{Aix Marseille Universit\'e, I2M, UMR 7373, 13453 Marseille, France}
         \email{benjamin.audoux@univ-amu.fr}
\author[J.B. Meilhan]{Jean-Baptiste Meilhan}
         \address{Universit\'e Grenoble Alpes, IF, 38000 Grenoble, France}
         \email{jean-baptiste.meilhan@univ-grenoble-alpes.fr}
\author[E. Wagner]{Emmanuel Wagner}
         \address{IMB UMR5584, CNRS, Univ. Bourgogne Franche-Comt\'e, F-21000 Dijon, France}
         \email{emmanuel.wagner@u-bourgogne.fr}
\date{\today}

\begin{abstract}
We consider knotted annuli in $4$--space, called $2$--string links, which are knotted surfaces in codimension two that are naturally related, 
via closure operations, to both $2$--links and $2$--torus links. 
We classify $2$--string links up to link-homotopy by means of a $4$--dimensional version of Milnor invariants. 
The key to our proof is that any $2$--string link is link-homotopic to a ribbon one; this allows to use the homotopy classification obtained in the ribbon case by P.~Bellingeri and the authors. 
Along the way, we give a Roseman-type result for immersed surfaces in $4$--space. 
We also discuss the case of ribbon $k$--string links, for $k\geq 3$. 
\end{abstract}

\maketitle

\section{Introduction}
The study of knotted objects of several components up to link-homotopy was initiated by J.~Milnor in \cite{Milnor}. 
Roughly speaking, a \emph{link-homotopy} is a continuous deformation during which distinct components remain disjoint, but each
component may intersect itself. 
Studying knotted objects of several components up to link-homotopy is thus very natural, 
since it allows to unknot each component individually, 
and only records their mutual interactions; this is, in some sense, ``studying links modulo knot theory''. 

In the usual context of $1$--dimensional knotted objects in $3$--space, the first results were given by Milnor himself, 
who showed that his $\overline{\mu}$--invariants classify links with at most $3$ components up to link-homotopy. 
The case of $4$--component links was only completed thirty years later by J.~Levine, using a refinement of Milnor invariants \cite{Levine}.
A decisive step was then taken by N.~Habegger and X.~S.~Lin, who showed that Milnor invariants are actually well-defined invariants for \emph{string links}, 
{\it i.e.} pure tangles without closed components, and that they classify string links up to link-homotopy for any number of components \cite{HL}. 
\medskip

In the study of higher dimensional knotted objects in codimension $2$, the notion of link-homotopy 
seems to have first been studied by W.~S.~Massey and D.~Rolfsen for $2$--component $2$--links, {\it i.e.} two $2$--spheres embedded in $4$--space \cite{MR}.
In the late nineties, the study of $2$--links up to link-homotopy was definitively settled by A.~Bartels and P.~Teichner, who showed in \cite{BT} 
that all $2$--links are link-homotopically trivial. Actually, their result is much stronger, as it holds in \emph{any} dimension.
However, other classes of knotted surfaces in $4$--space remain quite interesting. 
In particular, in view of Habegger-Lin's work, it is natural to consider \emph{$2$--string links}, 
which are properly embedded annuli in the $4$--ball with prescribed boundary (see Definition \ref{def:sl}).\footnote{The terminology 
``$2$--string link'' is also sometimes used in the literature for a $2$--component ($1$--dimensional) string link ; 
since we are dealing here with surfaces in $4$--space, no confusion should occur.  }
One advantage of $2$--string links, as opposed to $2$--links, is that they carry a natural composition rule. 
Moreover, there are canonical closure operations turning a $2$--string link into a $2$--link or into a $2$--torus link, 
making this notion also relevant to the understanding of the more classically studied knotted spheres and tori in $4$--space. 
\medskip

The main result of this paper is the following. 
{
\renewcommand{\thetheo}{\ref{thm:2}}
\begin{theo}
Milnor $\mu^{(4)}$--invariants classify $2$--string links up to link-homotopy.
\end{theo}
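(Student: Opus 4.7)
\medskip
\noindent\textbf{Proof proposal.}
The overall strategy is to reduce the problem to the case of ribbon $2$--string links, where the analogous classification is already available from joint work with P.~Bellingeri. Concretely, if every $2$--string link is link-homotopic to a ribbon one, then given two $2$--string links $L$ and $L'$ with the same Milnor $\mu^{(4)}$--invariants, we can link-homotope each of them to ribbon representatives $L_r$ and $L'_r$. Since $\mu^{(4)}$ is itself a link-homotopy invariant (to be verified separately, presumably directly from the definition), we have $\mu^{(4)}(L_r)=\mu^{(4)}(L'_r)$, so the ribbon classification yields a link-homotopy between $L_r$ and $L'_r$, and concatenation with the two previous homotopies proves the injectivity half of the statement. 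Realization of any prescribed tuple of invariants is inherited from the ribbon setting, where it is already known.

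The main effort therefore lies in the reduction step: every $2$--string link is link-homotopic to a ribbon one. First I would establish the announced Roseman-type result for immersed surfaces in $4$--space, providing a set of local moves on broken surface diagrams that generates the link-homotopy relation; this should refine Roseman's moves for the isotopy case by allowing, on each component separately, the creation, removal, and slide of self-intersection curves through other sheets of the same component. With this move calculus at hand, self-intersection curves on a single component become essentially inessential, and the only obstruction to being ribbon lies in the inter-component double curves. The plan is then to use the flexibility afforded by self-intersections of each individual component to bring the local configuration along each inter-component double curve into a ribbon normal form, namely a transverse disk-through-sheet pattern compatible with a ribbon presentation of each annulus.

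The main obstacle I expect is precisely this normalisation step, where the global topology of the double-curve set may prevent a naive local reduction and could force a preliminary simplification of its arrangement (for instance, turning closed double curves into arcs, or unknotting them relative to each component) before the local move can be applied. A careful inductive scheme, reducing some complexity measure of the inter-component double-curve set at each step and using the Roseman-type calculus to track the effect of each move, should make it work. Once this reduction lemma is in hand, combining it with the ribbon classification immediately delivers Theorem~\ref{thm:2}.
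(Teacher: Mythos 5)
Your global architecture matches the paper's: show that $\mu^{(4)}$ is a link-homotopy invariant, show that every $2$--string link is link-homotopic to a ribbon one, and conclude via the link-homotopy classification of ribbon $2$--string links from \cite{ABMW}. The deduction of the theorem from these two ingredients, including the realization statement, is correct as you set it up.

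The genuine gap is in your plan for the reduction step. You propose to normalize the inter-component double curves directly into a ribbon pattern, using the freedom to create and move self-intersections of each individual component, and you yourself flag this normalization as the main obstacle, to be resolved by ``a careful inductive scheme'' on some complexity measure of the double-curve set. No such scheme is given, and nothing in your outline explains why the global arrangement of inter-component double curves of an arbitrary $2$--string link could be simplified at all: this is precisely the hard content of the theorem, not a technical afterthought. The paper does not attempt such a direct normalization. Instead, the key external input is Bartels--Teichner's theorem: the disk-closure of the $2$--string link is a $2$--link, hence link-homotopically trivial by \cite[Thm.~1]{BT}. The singular Roseman calculus converts that link-homotopy into a finite sequence of Roseman and singular Roseman self-moves on the diagram of the disk-closure, and the actual work is then to show that each such move can be carried out in the presence of the thin ``outer annuli'' that record the string-link structure, while preserving a pseudo-ribbon form of the diagram (a case analysis over the dimension of the move locus, using auxiliary moves that slide and cut the outer annuli). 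Your proposal omits the Bartels--Teichner input entirely, and with it the only available mechanism for killing the inter-component intersections; as written, the reduction lemma --- the heart of the theorem --- is not proved.
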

\addtocounter{theo}{-1}
}
\noindent Here, the classifying invariant is a $4$--dimensional version $\mu^{(4)}$ of Milnor invariants, 
which is very natural in view of the classical $3$--dimensional case \cite{HL}. 
More precisely, we obtain that the group of $n$--component $2$--string links up to link-homotopy has rank 
$\sum_{k=2}^n \frac{n!}{(n-k)!(k-1)}$ (see Remark \ref{rem:milnor}).
This is in striking contrast with the case of $2$--links. 
\medskip

Theorem \ref{thm:2} relies on two main ingredients, 
which both involve the subclass of \emph{ribbon $2$--string links}, 
{\it i.e.} $2$--string links bounding immersed $3$--balls with only ribbon singularities 
(see Definition \ref{def:rsl}). 
The first ingredient is that, although seemingly very special, this subclass turns out to be 
generic up to link-homotopy: 
{
\renewcommand{\thetheo}{\ref{thm:1}}
\begin{theo}
Any $2$--string link is link-homotopic to a ribbon one. 
\end{theo}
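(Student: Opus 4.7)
The plan is to exhibit, for any $2$--string link $L$, a link-homotopy from $L$ to a $2$--string link $L'$ which bounds immersed $3$--balls with only ribbon singularities. The strategy is to start with \emph{arbitrary} immersed $3$--balls bounded by the components of $L$, and to progressively simplify their singularities; the boundary modifications of $L$ produced along the way should only be self-intersections of individual components, which is exactly what link-homotopy permits.

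As a first step, I would construct the initial spanning $3$--balls. Each component of $L$ is an annulus properly embedded in $B^4$, and hence bounds an immersed $3$--ball, obtained after a preliminary isotopy by a standard cone or Seifert-type construction. After putting everything in general position, the union $W$ of these immersed $3$--balls has singular set a $1$--complex consisting of double arcs and circles, together with isolated triple points and branch points.

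The second, central step is to invoke the Roseman-type theorem for immersed surfaces in $4$--space announced in the introduction, which provides a finite list of local moves sufficient to relate any two generically immersed surfaces lying in the same link-homotopy class. Using these moves, I would successively eliminate every non-ribbon feature of the singular set of $W$: branch points would be pushed to the boundary, triple points and closed double circles removed by Whitney-type moves inside the $3$--balls, and clasp-type double arcs traded for ribbon arcs by finger moves. Each move is chosen so that the corresponding modification of $L$ introduces only self-intersections of a single component, hence is a link-homotopy.

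The main technical obstacle, I anticipate, is to handle singularities sitting on sheets of \emph{different} components of $W$: these cannot be directly killed by moves affecting only a single component of $L$, since a crossing change between distinct components would alter the link-homotopy class. One must therefore first relocate such mixed singularities to a single component, using moves supported in the interior of $W$ that do not affect $L$ at all, and only then eliminate them by moves realizable as link-homotopies. Carrying out this program uniformly, for each type of non-ribbon singularity and for every combinatorial configuration of sheets involved, is the delicate combinatorial core of the proof.
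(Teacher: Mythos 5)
There is a genuine gap, and it sits exactly where you locate ``the delicate combinatorial core'': your proposal gives no reason why the non-ribbon singularities of $W$ can always be eliminated, and in particular no mechanism for killing intersections involving two \emph{distinct} components. Saying that mixed singularities should be ``relocated to a single component by moves supported in the interior of $W$'' is not an argument --- whether this is always possible is essentially equivalent to the theorem you are trying to prove. The paper's proof does not attempt any such direct simplification; its essential input is Bartels--Teichner's theorem that every $2$--link is link-homotopically \emph{trivial}. Concretely, one closes the $2$--string link into its disk-closure $2$--link, applies Bartels--Teichner to get a link-homotopy to the unlink, converts that link-homotopy into a finite sequence of Roseman and singular Roseman self-moves via Proposition \ref{prop:singularRosemanMoves}, and then --- this is the technical heart --- shows move by move (organized by the dimension of each move's locus) that the thin ``outer annuli'' recording the original boundary data can always be pushed out of the way by auxiliary moves, so that the whole sequence preserves a pseudo-ribbon structure; ribbonness then follows from Lemma \ref{lem:PseudoRibbon}. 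Without Bartels--Teichner, or some equally strong global input, your local-move simplification has no reason to terminate in a ribbon configuration.

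A second, more structural problem: the Roseman-type result you invoke (Proposition \ref{prop:singularRosemanMoves}) concerns generic projections of immersed \emph{surfaces} in $4$--space and the moves relating diagrams of link-homotopic surfaces. It is not a tool for normalizing the singular set of the union $W$ of immersed $3$--balls, which is a $3$--complex in $B^4$ with a much richer generic singularity structure (double surfaces, triple curves, quadruple points, and branch loci); nothing in the paper lets you manipulate $W$ by ``Whitney-type moves inside the $3$--balls'' while controlling its boundary only up to link-homotopy. Note also that ribbonness is defined via the disk-closure, so the closure step you omit is not optional, and a cone or Seifert-type construction produces immersed $3$--balls whose singularities are nowhere near ribbon type --- the entire burden of the proof is thereby deferred to the unproved simplification step.
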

\addtocounter{theo}{-1}
}

The second ingredient is a recent work of P.~Bellingeri and the authors \cite{ABMW}, 
which uses \emph{welded} knot theory to give a link-homotopy classification of ribbon $2$--string links\footnote{In \cite{ABMW}, ribbon $2$--string links are called ribbon tubes. }  
(see however Remark \ref{rem:ribbonlh} below).
Observing that the result of \cite{ABMW} reformulates in terms of Milnor $\mu^{(4)}$--invariants, 
and showing that these are invariant under link-homotopy, we thus obtain Theorem \ref{thm:2}.

The strategy of proof of Theorem \ref{thm:1} can be roughly outlined as follows. 
By shrinking and stretching a neighborhood of the boundary, a $2$--string link $T$ can be regarded as a $2$--link $L_T$ with thin, 
unknotted tubes attached, called \emph{outer annuli} below. 
Owing to \cite[Thm.~1]{BT}, there exists a link-homotopy from the $2$--link $L_T$ to the trivial $2$--link. 
Generically, this link-homotopy is link-homotopic to a composition of finger moves, Whitney tricks, 
cusp homotopies---each involving a single component---and isotopies.  
We are thus left with proving that these deformations can be always performed on the $2$--link $L_T$ 
so that they only produce ribbon-type linking with the outer annuli. 

For this purpose, we develop a diagrammatic theory for immersed surfaces in $4$--space. 
We introduce three singular Roseman moves, which are local moves on singular surface diagrams (see Figure \ref{fig:RosemanMoves}), 
and prove the following, which generalizes Roseman's theorem on embedded surfaces \cite{Roseman}. 
{
\renewcommand{\thetheo}{\ref{prop:singularRosemanMoves}}
\begin{prop}
Two singular surface diagrams represent (link-)homotopic immersed surfaces in $4$--space
if and only if they are connected by a finite sequence of Roseman moves and singular Roseman (self-)moves. 
\end{prop}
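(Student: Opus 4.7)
The strategy is to adapt Roseman's original argument for embedded surfaces \cite{Roseman} to the setting of immersions deformed through (link-)homotopies, with the extended list of singularities accounted for by the three singular Roseman moves. The parenthetical notations ``(link-)'' and ``(self-)'' indicate that the statement is proved uniformly in two cases: general homotopy, for which the singular moves may involve any pair of sheets; and link-homotopy, for which the singular moves are restricted to a single component (the self-versions).

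The ``if'' direction is straightforward. The classical Roseman moves are each realized by a local ambient isotopy, hence \emph{a fortiori} by a (link-)homotopy. For each of the three singular Roseman moves of Figure \ref{fig:RosemanMoves}, one writes down an explicit local model of an immersed $3$-dimensional bordism in $B^3\times[0,1]$ between the before and after pictures, supported in a small ball, and checks that when the ``self-'' restriction is in force, this local bordism involves only one component.

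For the ``only if'' direction, take a (link-)homotopy $H\colon \Sigma\times[0,1]\to B^4$ between immersed surfaces representing the two given diagrams. After a small perturbation that preserves the (link-)homotopy property (in particular, keeps distinct components disjoint throughout in the link-homotopy case), one may assume that $H$ is in general position with respect to the projection $\pi\colon B^4\to B^3$ and has only generic self-intersections (transverse double points and, at isolated times, transverse triple points or Whitney/cusp degenerations, occurring only within a single component in the link-homotopy case). Composing with $\pi$ then produces a one-parameter family $D_t$ of singular surface diagrams which is smooth except at finitely many critical times $0<t_1<\cdots<t_N<1$, between which $D_t$ changes only by ambient isotopy of the diagram.

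The core step is then to enumerate, via a transversality/codimension-one argument on multi-jets of maps $\Sigma\to B^3$, the possible local models at each $t_i$. For diagrams of embedded surfaces, Roseman's analysis exhibits exactly the classical Roseman moves. In our setting the new phenomena are the generic ways in which self-intersection points of $H$ can interact with the double-point set of $\pi\circ H$, namely: the birth/death of a pair of double points on a sheet passing through another, the passage of a self-intersection through a fold point, and the passage of a self-intersection through a triple point—these correspond exactly to the three singular Roseman moves. The main obstacle of the proof is this local classification: one has to verify that the proposed list is both complete and irredundant, which requires a careful codimension count for each stratum and a check that, when the (link-)homotopy restriction forces all sheets involved to belong to the same component, only the self-versions are needed. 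Once this is established, concatenating the local models read off at $t_1,\dots,t_N$ yields the desired sequence of moves.
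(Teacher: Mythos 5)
Your plan follows essentially the same route as the paper: adapt Roseman's generic-position analysis to a generic level-preserving (link-)homotopy, compose with the projection to $\R^3\times[0,1]$, and classify the new codimension-one events created by the self-intersection points of the immersion. Your three phenomena match the paper's moves (i) (birth/death of a pair of singular points, {\it i.e.} finger/Whitney moves), (j) (a singular point passing through a branch point --- a Whitney umbrella of the projection, which is what you presumably mean by ``fold point''), and (h) (a singular point passing through a triple point), and the observation that a link-homotopy forces all such events onto a single component is the same in both.

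There is, however, one concrete omission in your enumeration, precisely at the step you flag as the main obstacle (completeness of the list of local models). Writing $S^*$ for the trace in $\R^3\times[0,1]$ of the singular double points and $D^*$ for that of the double-point locus, $S^*$ is $1$--dimensional inside the $2$--dimensional $D^*$, so besides meeting $T^*$ and $B^*$ and having critical points of the time function, $S^*$ can generically \emph{self-intersect}: two singular points cross one another along a line of double points at an isolated time. This fourth event does not appear in your list. It produces no new move --- the diagrams immediately before and after the crossing coincide, and the homotopy can be locally modified so that the two singular points cancel and are re-created, or simply avoid each other, rather than cross --- but this must be argued, since otherwise the claimed completeness of moves (a)--(j) is not established. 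Filling in this case (and, more generally, actually carrying out the codimension count you defer) is what separates your plan from a proof.
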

\addtocounter{theo}{-1}
}
This implies in particular a Roseman-type result for isotopies of immersed surfaces in $4$--space, 
which only involves one additional singular move (see Corollary \ref{cor:singrose}),  
and which generalizes a result given for $2$-braids by Kamada \cite{kamadaOJM}. 
\medskip 

Theorem \ref{thm:1} thus provides a generalization of the link-homotopy classification for ribbon $2$--string links of  \cite{ABMW}, 
and shows the relevance of the subclass of ribbon objects. 
The main result of \cite{ABMW} also raised the question of higher dimensional ribbon knotted objects in codimension two. 
The final section of this paper provides an answer by classifying ribbon $k$--string links up to link-homotopy, for all $k\ge 2$. 
This result builds on the homotopy classification of welded string links of \cite{ABMW}, 
combined with higher dimensional analogues of S.~Satoh's Tube map \cite{Satoh}. 
The key point here, which might be well-known to the experts, is that higher dimensional ribbon knotted objects in codimension $2$ 
essentially ``stabilize'' at dimension $2\looparrowright4$.

\begin{remarque}\label{rem:ribbonlh}
The notion of link-homotopy considered in \cite{ABMW} to classify ribbon $2$--string links may \emph{a priori} seems weaker than the usual notion, 
considered here. 
In \cite{ABMW}, we actually consider the equivalence relation generated by the self-circle crossing change operation, which locally replaces the over/under information 
at a circle of double points in a generic diagram, see \cite[\S~2.3]{ABMW}. Clearly, this operation can be realized by a regular link-homotopy. 
Conversely, it follows from Theorem \ref{thm:2} and \cite[Thm.~2.34]{ABMW} that two link-homotopic ribbon $2$--string links are necessarily related by a sequence of isotopies and self-circle crossing changes. In other words, the two notions coincide for ribbon $2$--string links. 
\end{remarque}

\begin{remarque}\label{rem:braidclosure}
Recall that a $2$--torus link is a smooth embedding of disjoint tori in $4$--space. 
Given a $2$--string link, there is a natural ``braid-like'' closure operation that yields a $2$--torus link.  
In \cite{ABMW}, the classification of ribbon $2$--string links up to link-homotopy was promoted to one for ribbon $2$--torus links, 
using the Habegger-Lin classification scheme of
\cite{HL2}. Unfortunately, the same method cannot be used in our more
general context since the above braid-closure map from 2--string links
to $2$--torus links is not surjective, even up to link-homotopy; see Appendix \ref{app:NonSurjectivity}.
It would be very interesting to achieve a general homotopy classification of $2$--torus links, 
and to compare it to our and Bartels--Teichner's results. 
\end{remarque}

\begin{remarque}
Throughout this paper, we will be working in the smooth category. 
We point out that the main result of \cite{ABMW} which we are using here is stated for locally flat objects; 
but since we are considering ribbon surfaces, there is no obstruction for approximating them by smooth objects.
\end{remarque}

The paper is organized as follows. 
We begin in Section \ref{sec:Ribbon} by reviewing $2$--string links and their ribbon version. In Section \ref{sec:roseman}, we introduce singular broken surface diagrams and singular Roseman moves, and we give a Roseman-type result for immersed surfaces in $4$--space.  
In Section \ref{sec:part1}, we prove Theorem \ref{thm:1} using singular broken surface diagrams. 
In Section \ref{sec:part2}, we review the definition of $4$--dimensional Milnor invariants and prove Theorem \ref{thm:2}. 
In the final Section \ref{sec:codimension2}, we give the link-homotopy classification of ribbon string links in higher dimensions. 

\begin{acknowledgments}
The authors would like to thank Peter Teichner and Akira Yasuhara for insightful discussions, 
and Louis Funar, whose question about the higher dimensional case led to the last section of this paper. 
We also thank the referee for his/her careful reading of the manuscript and for valuable comments. 
Thanks are also due to the GDR Tresses for providing support to start this project, and to the Isaac Newton Institute for Mathematical Sciences, Cambridge, 
for support and hospitality during the programme \emph{Homology theories in low dimensional topology}, 
where work on this paper was continued. E.W. wishes to thank 
the Universit\'e de Bourgogne for his CRCT, which facilitated this work.
\end{acknowledgments}

\section{Preliminaries}\label{sec:prelim}
In this section, we review the main objects of this paper---namely $2$--string links and their ribbon version---and the main tools used for their study---namely singular broken surface diagrams. 

\subsection{(Ribbon) $2$--string links}
\label{sec:Ribbon}

Fix $n\in\N^*$ disjoint Euclidian disks $D_1,\cdots ,D_n$ in the interior of the $3$--ball $B^3$. 
Denote by $C_i$ the oriented boundary of $D_i$. 

\begin{defi}\label{def:sl}
A ($n$--component) \emph{$2$--string link} is the isotopy class of a smooth embedding
 \[ \sqcup_{i=1}^n \left(S^1\times [0,1]\right)_i\hookrightarrow B^4 \]
of $n$ disjoint copies of the oriented annulus $S^1\times [0,1]$ into $B^4=B^3\times [0,1]$, 
such that the image of the $i$th annulus is cobounded by $C_i\times \{0\}$ and  $C_i\times \{1\}$, 
with consistent orientations.

Replacing ``embedding'' by  ``immersion with a finite number of double
points'', we obtain the notion of \emph{singular $2$--string link}.
\end{defi}

The natural operation of stacking product endows the set of $n$--component $2$--string links, denoted by $\textrm{SL}^{2}_n$, with a monoid structure, where the identity element is the trivial $2$--string link $\cup_i C_i\times [0,1]$.  

Given a $2$--string link $T$, the union of $T$ and the disks $D_i\times  \{\varepsilon\}$ for all $i=1,\cdots, n$ and $\varepsilon=0,1$ 
yields a $2$--link, {\it i.e.} a smooth embedding of $n$ copies of the $2$--sphere, in $4$--space. We call this $2$--link the \emph{disk-closure} of $T$.
There is another natural closure operation on $2$--string links where, as in the usual braid closure operation, one glues a copy of the trivial $2$--string link in the complement of $B^4$, thus producing a $2$--torus link. We shall call this operation the \emph{braid-closure map}; see Remark \ref{rem:braidclosure} and Appendix \ref{app:NonSurjectivity}.\\

As explained in the introduction, the following subclass of $2$--string links turns out to be quite relevant when working up to link-homotopy. 
\begin{defi}\label{def:rsl}
A $2$--string link $T$ is \emph{ribbon} if its disk-closure bounds $n$ immersed $3$--balls $B_1,\cdots,B_n$ such that
the singular set of $\cup_{i=1}^nB_i$ is a disjoint union of  \emph{ribbon singularities}, {\it i.e.} transverse disks whose
preimages are two disks, one lying in $\cup_{i=1}^n\mathring{B}_i$ and
the other having its boundary embedded in $T$.
\end{defi}

For any (singular) 2--string link $T$, we denote by $X(T)$ the complement of a tubular 
neighborhood of $T$ in $B^4$. We fix a basepoint which is far above $T$ (in a given direction that we shall use later to project
$T$ in $\R^3$), and define the \emph{fundamental group of $T$} as the fundamental group of $X(T)$ relative to this
basepoint.
We define now some special elements of the fundamental group.
For any point $p$ of $T$ which is regular (with respect to the chosen projection direction), we define the associated \emph{meridian}
as the loop which descends from the basepoint straight to $p$, turns positively around $p$ according to the combined orientations of $T$ and
$B^4$, and goes straight back to the basepoint. In particular, we define the \emph{$i$th bottom and top meridians} as the meridians
associated, respectively, to a point of $C_i\times\{0\}$ and $C_i\times\{1\}$.
Finally, we define an \emph{$i$th longitude for $T$} as an arc on the
boundary of a tubular neighborhood of the $i$th component of $T$, 
with fixed prescribed endpoints near  
$C_i\times\{0\}$ and $C_i\times\{1\}$, and closed by straight lines to the basepoint. It
can be noted that two $i$th longitudes differ by a number of $i$th bottom 
meridians; see \cite[\S~2.2.1]{ABMW} for more details. 

\subsection{Singular broken surface diagrams}\label{sec:roseman}

\emph{Broken surface diagrams} are the natural analogue of knot diagrams for
embedded surfaces in dimension 4. They correspond to generic projections of the
surfaces onto $\R^3$; this produces singularities, namely 1--dimensional loci of
double points and isolated triple and branch points. Double 
points are enhanced with an extra over/under information pictured by
erasing a small neighborhood of the undersheet. A finite set of local
moves, called \emph{Roseman moves} \cite{Roseman}, are known to generate the isotopy
relation, see Figure \ref{fig:RosemanMoves} for some examples. 
In this paper we shall use Roseman's original
notation $\textrm{(a)},\cdots,\textrm{(g)}$, as given in
\cite[Fig.1]{Roseman}, to denote them, possibly with an arrow
subscript if considering only a specific direction of the move. 
For example, $\mvdir{a}$ refers to Roseman move $\textrm{(a)}$ when applied from left to right in Figure \ref{fig:RosemanMoves}.

\begin{figure}[!h]
\[
\begin{array}{rc}
  \textrm{(a)}:&\vcenter{\hbox{\includegraphics{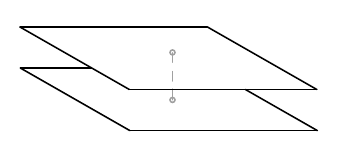}}}\
\longleftrightarrow\
                 \vcenter{\hbox{\includegraphics{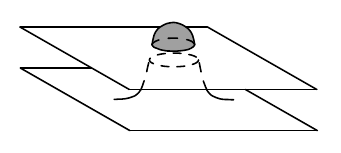}}}\\
  \textrm{(b)}:&\vcenter{\hbox{\includegraphics{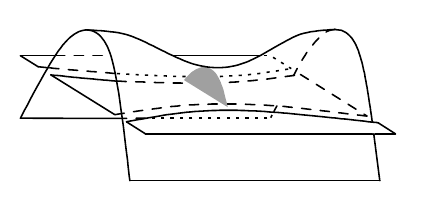}}}\
\longleftrightarrow\
                 \vcenter{\hbox{\includegraphics{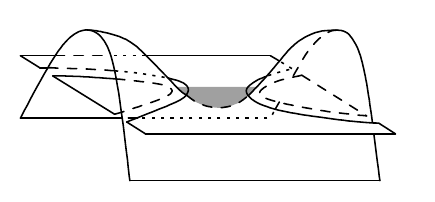}}}\\
  \textrm{(c)}:&\vcenter{\hbox{\includegraphics{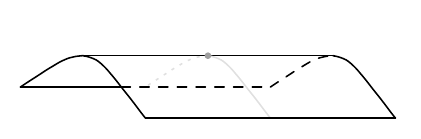}}}\
\longleftrightarrow\
                 \vcenter{\hbox{\includegraphics{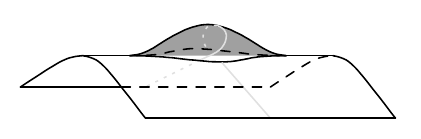}}}
\end{array}
\]
  \caption{The first three Roseman moves: {\scriptsize the move loci,
      in a neighborhood of which the moves are performed, are shown in
      dark grey; for move (c), a path on
  the surface has been drawn in light grey to help visualizing the picture}}
  \label{fig:RosemanMoves}
\end{figure}

In this section, we extend broken surface diagrams and Roseman moves to the singular setting. 
Generically, an immersed surface has a finite number of isolated singular double
points, and each of these singular points projects on an isolated point
inside a 1--dimensional locus of double points, where the over/under
information swaps; these singular double points shall be denoted by a
dot. See Figure \ref{fig:SingularRosemanMoves} for a few examples. 
A double point is called \emph{regular} if it is neither a triple, a branch, nor a singular point. 
\begin{defi}
A \emph{singular broken surface diagram} is a generic projection to $3$--space of an immersed surface in $4$--space, 
together with over/under information for each line or circle of regular double points. \\
The \emph{singular locus} of the diagram is the set of its double points, which contains in particular singular, branch and triple points.
\end{defi}
Of course, some additional moves on diagrams are required to generate isotopy and/or homotopy of immersed surfaces.  
These are the three \emph{singular Roseman moves} given, up to mirror image, in
Figure \ref{fig:SingularRosemanMoves}. Here, by \emph{mirror image}, we mean
the global swap of the over/under informations. 
A singular Roseman move shall be said to
be a \emph{self-move} if it involves singular points whose preimages belong to the same
connected component.

\begin{figure}[h!]
\[
\begin{array}{rc}
  \textrm{(h)}:&\vcenter{\hbox{\includegraphics{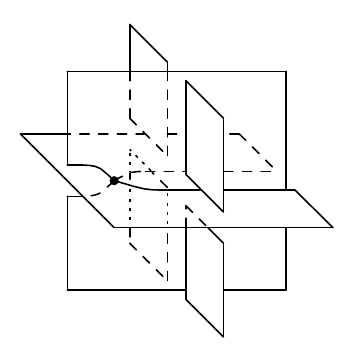}}}\
\longleftrightarrow\
                 \vcenter{\hbox{\includegraphics{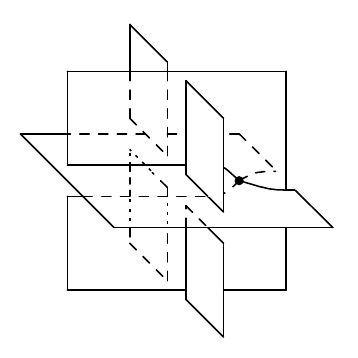}}}\\
  \textrm{(i)}:&\vcenter{\hbox{\includegraphics{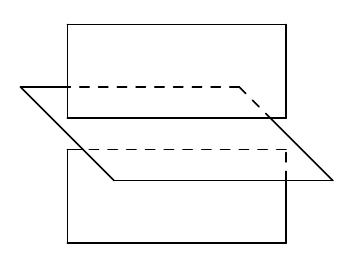}}}\
\longleftrightarrow\
                 \vcenter{\hbox{\includegraphics{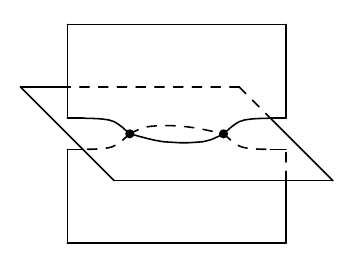}}}\\
  \textrm{(j)}:&\vcenter{\hbox{\includegraphics{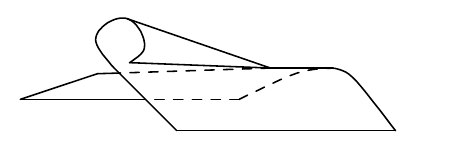}}}\
\longleftrightarrow\
                 \vcenter{\hbox{\includegraphics{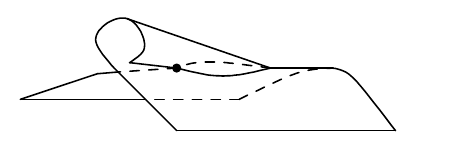}}}
\end{array}
\]
  \caption{Singular Roseman moves}
  \label{fig:SingularRosemanMoves}
\end{figure}

\begin{prop}\label{prop:singularRosemanMoves}
Two singular broken surface diagrams represent the same immersed surface in $4$--space up to
(link-)homotopy if and only if they are connected by a finite sequence of Roseman moves (a)--(g) and singular Roseman (self-)moves (h)--(j).
\end{prop}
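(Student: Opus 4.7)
The plan is to apply the standard Roseman-theoretic strategy, namely a generic analysis of $1$-parameter families of immersions combined with a generic projection to $\R^3$.

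The ``if'' direction is routine: each move (a)--(g) preserves the isotopy class of the immersed surface (classical Roseman theorem), and each of the moves (h)--(j) is by design a local model of a generic codimension-one transition in a $1$-parameter family of immersions of surfaces into $B^4$, hence describes a local (link-)homotopy.

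For the ``only if'' direction, consider a (link-)homotopy $F\colon \Sigma\times [0,1]\to B^4$ between the two immersed surfaces. By multi-jet transversality, one may perturb $F$ so that for all but finitely many parameter values $t_1<\dots<t_N$, the time-slice $F_t$ is a generic immersion whose singular set is a finite collection of isolated transverse double points, and so that at each critical time $t_k$ exactly one codimension-one event occurs: either a birth or death of a transverse double point, or a codimension-one interaction between an existing double point and another feature of the immersion. Composing with a generic projection $\pi\colon B^4\to\R^3$ yields a $1$-parameter family of singular broken surface diagrams. On each interval $(t_{k-1},t_k)$, $F_t$ remains a generic immersion, and a Roseman-style argument shows that the diagram evolves only through moves (a)--(g) together with those singular Roseman moves that are realized by an isotopy; at each critical time $t_k$, the $4$-dimensional event projects, after a generic choice of $\pi$, to a local diagrammatic change which, up to conjugation by moves (a)--(g), coincides with one of the singular Roseman moves (h)--(j). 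When $F$ is a link-homotopy, every double point involved is a self-intersection of a single component, forcing each occurring singular Roseman move to be a self-move.

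The main obstacle is the careful enumeration of the codimension-one events and the verification that the three listed singular moves (h)--(j) are exhaustive modulo conjugation by the classical Roseman moves. This is a Thom-Boardman style local analysis: one must treat case by case how a singular double point in the diagram may interact with a nearby sheet, triple point, or branch point, and how a birth/death pair of double points may present itself in various local configurations relative to the projection direction. A convenient way to organize the verification is to stratify the product of the space of immersions with the parameter space of projection directions, and to check that each codimension-one stratum in this enlarged space is captured by a composition of moves (a)--(j); once this is established, the proposition follows directly.
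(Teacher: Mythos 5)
Your overall strategy---generically perturb the homotopy, project to $\R^3$, and enumerate the codimension-one events of the resulting movie---is exactly the strategy of the paper, which follows Roseman's original argument by studying the critical points of the time function restricted to the double-point locus $D^*$ of the projected movie $\pi\circ F$ in $\R^3\times[0,1]$. The problem is that your write-up stops precisely where the proof begins: you identify ``the careful enumeration of the codimension-one events and the verification that (h)--(j) are exhaustive'' as the main obstacle, and then assert that once this is established the proposition follows. That enumeration \emph{is} the mathematical content of the proposition beyond classical Roseman, and it cannot be deferred. Concretely, the new locus to control is the set $S^*$ of singular (immersion) double points, a $1$--dimensional subset of the $2$--dimensional surface $D^*$; the generic events are: $S^*$ meeting the triple-point locus $T^*$ (move (h)), $S^*$ having a local extremum with respect to time (move (i)), $S^*$ meeting the branch-point locus $B^*$ (move (j)), and $S^*$ meeting itself. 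None of these cases is actually treated in your argument.

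The last of these four cases is moreover a counterexample to your organizing claim that ``each codimension-one stratum \dots is captured by a composition of moves (a)--(j).'' Two singular points crossing one another along a curve of double points is a codimension-one stratum of the movie that is realized by \emph{none} of (h)--(j); the paper disposes of it by observing that the diagrams before and after the crossing coincide, so the homotopy can be locally modified into a trivial one (equivalently, the crossing of the two arcs of $S^*$ can be smoothed, either into nothing or into a cancellation--recreation pair). Without handling this case you cannot conclude exhaustiveness, and your proposed stratification argument would stall exactly here. A smaller inaccuracy: move (h) is not the projection of a $4$--dimensional event occurring at a critical time of $F$---it records the singular set passing through a triple point of the \emph{projection} and is realized by an ambient isotopy---so your dichotomy ``generic immersion on intervals / one $4$--dimensional event at each $t_k$'' does not align cleanly with the list (h)--(j); the cleaner bookkeeping, as in the paper, is to work throughout with the projected movie in $\R^3\times[0,1]$. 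Your reduction to self-moves in the link-homotopy case, and the ``if'' direction, are fine.
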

\begin{proof}
We follow closely Roseman's approach in \cite{Roseman}, and we shall adopt his notation. 
Given a surface $M^2$ in $\R^4$, Roseman's proof amounts to understanding the singularities of a generic
homotopy $F:M^2\times [0,1]\rightarrow
\R^4 \times [0,1]$. The map $F$ is level preserving, 
{\it i.e.} for each $t\in [0,1]$ we have $F(M^2\times \{t\})\subset \R^4 \times \{t\}$. 
We consider the map $\pi\circ F:M^2\times [0,1]\rightarrow \R^3 \times [0,1]$ 
where the projection $\pi:\R^4 \times [0,1]\rightarrow \R^3\times [0,1]$ is the standard projection 
on the first factor and the identity on the second.
Following Roseman, we denote respectively by $B$, $D$, $T$ and $Q$ the
set of branch points, double points, triple points and quadruple
points, which are subsets of the interior of $M^2\times [0,1]$.
In addition, we define here $S$, the set of singular points. 
Recall that $B\subset D$ and that $Q\subset T\subset D$.
We similarly have $S\subset D$. Given $X$ in the interior of $M^2\times [0,1]$ 
we denote by $X^*$ its image through $\pi\circ F$. 
The strategy of Roseman is to consider the critical points of 
the composition of $\pi \circ F$ with the projection onto the last factor, restricted to $D^*$ 
(we can assume that $\pi \circ F$ is a Morse function restricted to $D^*$).
In addition to the analysis provided by Roseman, which takes care of $B^*$, $D^*$, $T^*$ and $Q^*$, we have to handle $S^*$, 
and there are four situations that we have to consider, recalling that $S^*$ is one dimensional inside $D^*$ which is two dimensional:
\begin{itemize}
\item $S^*$ intersects $T^*$: this corresponds to  move (h);
\item $S^*$ has a local maximum or minimum: this corresponds to move (i);
\item $S^*$ intersects $B^*$: this corresponds to move (j);
\item $S^*$ intersects itself: this corresponds to two singular points
  crossing one another along a line of double points and this is
  actually not generic. The pictures before and after the crossing are
  indeed the same, and the (link-)homotopy can be locally
  modified into a trivial one.\footnote{This corresponds to smoothing the
  crossing in one way, smoothing it in the other way would replace the
crossing of the two singular points by their mutual cancellation and re-creation.}
\end{itemize}

A link-homotopy is a special case of homotopy where all the singular
points involve twice the same connected component. 
In this case, any singular Roseman move arising in a sequence of moves (a)--(j) is necessarily a self-move.
\end{proof}

\begin{remarque}\label{rem:la_trick}
  It is well known, see {\it e.g.} \cite[p.20]{FQ} or \cite{Hirsch} that, generically,  
  (link-)homotopies are generated by finger, Whitney (self-)moves and cusp homotopies. 
  Proposition \ref{prop:singularRosemanMoves} actually provides
  a broken surface diagram proof of this statement in the smooth
  category. Indeed, move (i) can be seen as the broken surface diagram counterpart of
  finger/Whitney moves since, in their traditional representation, finger/Whitney moves are
  a combination of moves (a) and (i); and reciprocally, move (i) can
  be seen as a combination of finger/Whitney and (b) moves. Similarly,
  move (j) can be seen, up to (c) and (d) moves, as a broken surface diagram
  realization of the cusp homotopy.
\end{remarque}

An isotopy of singular immersed surfaces, with a finite number of
singular double points, can be
seen as a homotopy which preserves the singular points. The
proof of Proposition \ref{prop:singularRosemanMoves} hence implies the
following corollary which may be interesting on its own. 

\begin{cor}\label{cor:singrose}
Two singular broken surface diagrams represent isotopic immersed
surfaces in $4$--space if and only if they differ by a sequence of
Roseman moves (a)--(g) and of singular Roseman moves (h). 
\end{cor}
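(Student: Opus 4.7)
The plan is to revisit the proof of Proposition \ref{prop:singularRosemanMoves} under the additional constraint that the deformation in question preserves the singular double points, and to determine which of the singular Roseman moves survive this constraint.

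First, I would use the preceding remark: an isotopy of singular immersed surfaces (with a fixed, finite number of singular double points) can be viewed as a homotopy $F:M^2\times[0,1]\to\R^4\times[0,1]$ for which no singular double point is created, destroyed, or merged during the deformation. In the language of Roseman's approach used in the proof of Proposition \ref{prop:singularRosemanMoves}, this amounts to requiring that, as $t$ varies in $[0,1]$, the stratum $S^*\subset D^*$ is transported homeomorphically rather than allowed to change topologically. Since Roseman's analysis of the other strata $B^*$, $D^*$, $T^*$, $Q^*$ and the resulting moves (a)--(g) makes no reference to the singular stratum, that part of the argument carries over verbatim.

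Next, I would inspect, one by one, the three generic situations involving $S^*$ listed in the proof of Proposition \ref{prop:singularRosemanMoves}. A local extremum of $S^*$ (the configuration producing move (i)) corresponds precisely to the birth or death of a pair of singular double points and is therefore incompatible with an isotopy. An intersection of $S^*$ with $B^*$ (the configuration producing move (j)) corresponds to a singular double point being absorbed into, or created from, a branch point, and likewise changes the number of singular points, so it too is forbidden. By contrast, an intersection of $S^*$ with $T^*$ (the configuration producing move (h)) simply slides a singular double point across a triple point along a curve of double points, leaving the set of singular double points unchanged, and is therefore admissible under an isotopy. The fourth case (self-intersection of $S^*$) is non-generic, exactly as noted in the proof of Proposition \ref{prop:singularRosemanMoves}.

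Combining these observations yields that isotopies between singular broken surface diagrams are generated by Roseman moves (a)--(g) and the singular Roseman move (h) only. The main (mild) obstacle is to confirm rigorously that moves (i) and (j) truly cannot be realized by an isotopy, \emph{i.e.}\ that any appearance of the corresponding critical configurations in $\pi\circ F$ forces a change in the set of singular double points; once that is checked, the conclusion follows from the proof of Proposition \ref{prop:singularRosemanMoves} with no further work.
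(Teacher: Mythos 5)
Your proposal is correct and follows essentially the same route as the paper: the authors observe that an isotopy of singular immersed surfaces is a homotopy preserving the singular points, and then note that the proof of Proposition \ref{prop:singularRosemanMoves} immediately yields the corollary, since the configurations producing moves (i) and (j) change the set of singular double points while the one producing move (h) does not. Your write-up merely makes explicit the case-by-case check that the paper leaves implicit.
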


\begin{remarque}
In \cite{kamadaOJM}, Kamada proved, in term of charts, a
  similar statement for \emph{singular $2$--braids}.
  His result involves two extra moves, but the first one (move CIV) is actually used for the commutation of two faraway singular points in a $2$--braid, 
  and is thus not needed in our context. 
\end{remarque}

\section{Any $2$--string link is link-homotopically ribbon}\label{sec:part1}

In this section, we prove Theorem \ref{thm:1} stating that any $2$--string link is link-homotopic to a ribbon one. 
The proof uses the langage of singular broken surface diagrams and relies on Bartels--Teichner's theorem \cite[Thm.~1]{BT}.

\subsection{Pseudo-ribbon diagrams}

Singular $2$--string links have been defined as immersions of annuli $S^1\times [0,1]$ in the $4$--ball. 
The images of the circles $S^1\times \big\{\frac{1}{3}\big\}$ and $S^1\times \big\{\frac{2}{3}\big\}$ split each annulus into an \emph{inner annulus} and two \emph{outer annuli}. 

\begin{defi}\label{def:pseudo}
A \emph{pseudo-ribbon diagram} for a singular $2$--string link is a diagram such that 
the images of the circles $S^1\times \{\frac{1}{3}\}$ and $S^1\times \{\frac{2}{3}\}$ bound embedded $2$--disks, called \emph{attaching disks}, 
and such that  
\begin{itemize}
 \item the interior of the attaching disks are disjoint from the diagram;   
 \item the outer annuli meet the singular locus only at essential circles of regular double points, such that each of these essential circles bounds a disk in an inner annulus, whose interior is disjoint from the singular locus of the diagram. 
\end{itemize}
\end{defi}
Pseudo-ribbon diagrams should be thought of as diagrams of knotted spheres (the inner annuli), each with a pair of thin tubes attached (the outer annuli) which are the thickening of 1--dimensional cores, possibly linked with the spheres. 
In the figures, outer annuli shall be pictured with thick lines, and attaching disks will be shaded.
\begin{remarque}
The first condition in Definition \ref{def:pseudo} implies in particular that the boundary of the attaching disks are necessarily disjoint from the singular locus. 
The second condition implies that outer annuli are pairwise disjoint. 
\end{remarque}
In what follows, we will call \emph{outer circles} the essential
curves of regular double points on outer annuli, and \emph{outer
disks} the disks on the inner annuli which are bounded by outer circles.
Two pseudo-ribbon diagrams are called \emph{equivalent} if they
represent isotopic singular $2$--string links and
\emph{link-equivalent} if they represent link-homotopic singular $2$--string links.

We now introduce four local moves on pseudo-ribbon diagrams, shown below up to mirror image:
\begin{itemize}
\item move A passes an attaching disk across a line of regular
  double points between inner annuli;
\[
 \vcenter{\hbox{\includegraphics{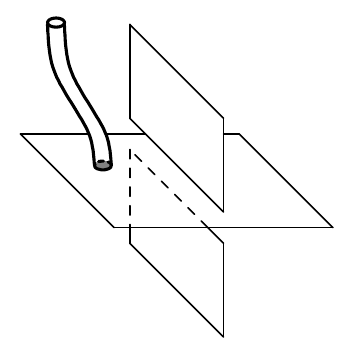}}}\ \stackrel{\textrm{A}}{\longleftrightarrow}\ \vcenter{\hbox{\includegraphics{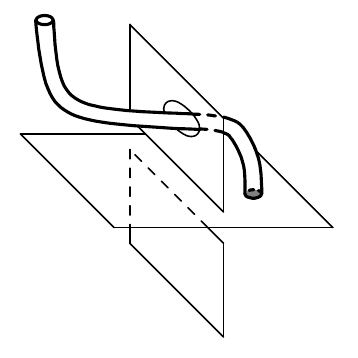}}}
\]
\item moves $\textrm{B}_1$ and $\textrm{B}_2$ pass an outer annulus across a line of regular
  double points between inner annuli;
  \[
\hspace{-.8cm}
 \vcenter{\hbox{\includegraphics{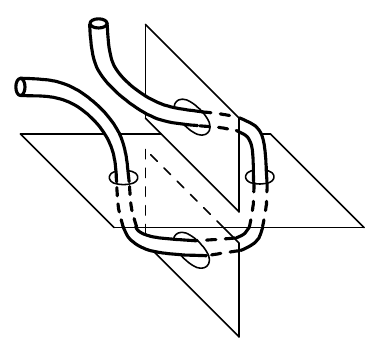}}}\
 \stackrel{\textrm{B}_1}{\longleftrightarrow}\
\vcenter{\hbox{\includegraphics{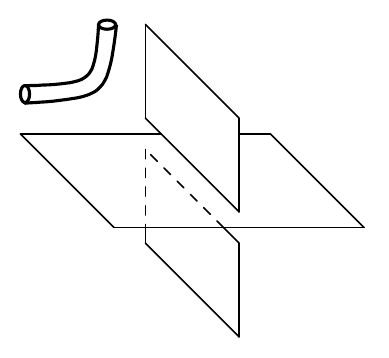}}}\ \stackrel{\textrm{B}_2}{\longleftrightarrow}\ \vcenter{\hbox{\includegraphics{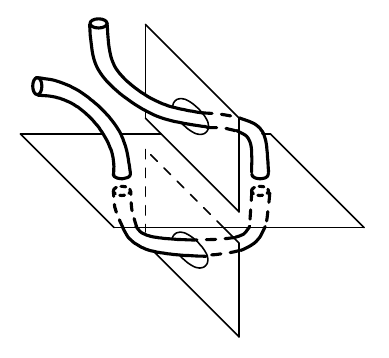}}}
\]

\item move C removes two nearby outer circles with same over/under information;
\[
\vcenter{\hbox{\includegraphics{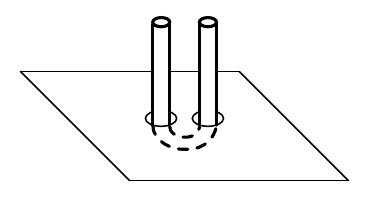}}}\ \stackrel{\textrm{C}}{\longleftrightarrow}\ \vcenter{\hbox{\includegraphics{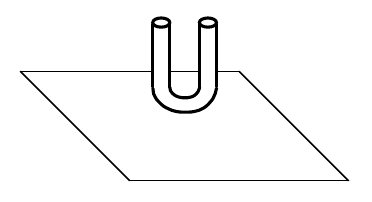}}}
\]
\item move D exchanges the relative 3--dimensional
  position of two outer annuli;
\[
\vcenter{\hbox{\includegraphics{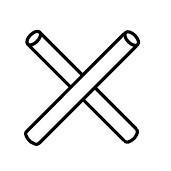}}}\ \stackrel{\textrm{D}}{\longleftrightarrow}\ \vcenter{\hbox{\includegraphics{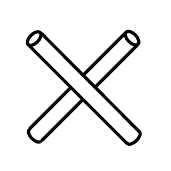}}}
\]
\end{itemize}
It should be noted that none of these moves involves any singular point.

The following is easily shown using Roseman moves: 
\begin{lemme}\label{lem:PseudoRibbonMoves}
 Two pseudo-ribbon diagrams which differ by a sequence of
  \begin{itemize}
\item moves A, $\textrm{B}_1$, $\textrm{B}_2$, C or D;
  \item Roseman or singular Roseman (self-)moves in a 3-ball which do 
not intersect any outer annulus;

  \end{itemize}
are (link-)equivalent.
\end{lemme}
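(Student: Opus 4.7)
The plan is to apply Proposition \ref{prop:singularRosemanMoves}: for each of the moves A, $\mathrm{B}_1$, $\mathrm{B}_2$, C and D, I would exhibit a realization as a finite sequence of Roseman moves (a)--(g) and possibly singular Roseman (self-)moves (h)--(j). The second bullet of the statement is then immediate: if a Roseman or singular Roseman (self-)move is performed inside a $3$-ball disjoint from every outer annulus, then neither the outer annuli, nor the attaching disks, nor the outer circles are modified, so the pseudo-ribbon structure is preserved on both sides, and Proposition \ref{prop:singularRosemanMoves} directly provides the required (link-)homotopy of the underlying singular $2$--string links.

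For the first bullet, I first observe that none of the moves A, $\mathrm{B}_1$, $\mathrm{B}_2$, C, D involves a singular point, so each of them is expected to be realized by a sequence of classical Roseman moves (a)--(g) only, and the resulting (link-)equivalence will in fact be a plain equivalence. For move A, the interior of the attaching disk is, by Definition \ref{def:pseudo}, disjoint from the diagram, so sliding it across a line of regular double points between two inner sheets is realized by Roseman moves of the triple-point type, with the attaching disk playing the role of the third sheet. Moves $\mathrm{B}_1$ and $\mathrm{B}_2$ are similar in nature, but applied to a thin strip of an outer annulus: regarding the outer annulus as a one-parameter family of meridian arcs, it suffices to perform the analogous Roseman sequence uniformly in this parameter. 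Move C cancels a pair of parallel outer circles with coherent over/under information; this corresponds to pushing the corresponding portion of outer annulus off the inner annulus it meets, and is thus a standard ``bubble'' Roseman move. Finally, move D exchanges the over/under position of two disjoint outer annuli at a crossing of their projections in $\R^3$, which is a Roseman move applied to two parallel embedded sheets.

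The main obstacle lies in checking that each of these decompositions can be arranged entirely inside a small $3$-dimensional neighborhood of the move locus, so that no outer circle, attaching disk, inner sheet or singular point outside the local picture is affected, and so that the pseudo-ribbon form is recovered at the end of the sequence. This reduces to a case-by-case inspection of the local models depicted in the figures for moves A--D, with the realizing isotopies supported in tubular neighborhoods of the visible sheets.
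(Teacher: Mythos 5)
Your proposal follows essentially the same route as the paper, which offers no detailed argument for this lemma beyond the one-line assertion that it ``is easily shown using Roseman moves'': that is precisely your strategy of realizing moves A, $\textrm{B}_1$, $\textrm{B}_2$, C and D by sequences of moves (a)--(g) supported near the move locus, and invoking Proposition \ref{prop:singularRosemanMoves} for the second bullet. Your sketch is in fact more detailed than the paper's; the only loose point is the treatment of move A (the attaching disks are auxiliary data rather than sheets of the broken surface diagram, so move A does not change the underlying immersed surface at all and needs no triple-point moves), but this only makes that case easier.
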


The ``pseudo-ribbon'' terminology is justified by the following: 
\begin{lemme}\label{lem:PseudoRibbon}
A $2$--string link having a pseudo-ribbon diagram whose singular locus
consists only of outer circles, is ribbon.
\end{lemme}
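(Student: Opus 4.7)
The plan is to construct, for each component $i$, an immersed $3$--ball $B_i$ bounded by the $i$-th sphere of the disk-closure of $T$, by gluing three embedded $3$--balls along the attaching disks. I would first observe that, under our hypothesis that the singular locus consists only of outer circles, each inner annulus is embedded and pairwise disjoint from the other inner annuli, and each outer annulus is embedded and disjoint from the other outer annuli. Combined with the condition that the attaching disks have interiors disjoint from the diagram, this implies that three embedded $2$--spheres appear in the projection $\R^3$: the \emph{inner sphere} formed by the $i$-th inner annulus together with its two attaching disks, and two \emph{tube spheres}, each given by one of the $i$-th outer annuli capped by an attaching disk and a boundary disk $D_i \times \{0\}$ or $D_i \times \{1\}$. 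By the Schoenflies theorem, each of these spheres bounds an embedded $3$--ball in $\R^3$; gluing the three $3$--balls of the $i$-th component along their shared attaching disks produces an abstract $3$--ball $B_i$ whose boundary is precisely the $i$-th sphere of the disk-closure of $T$.

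Next, I would verify the ribbon condition by analysing the singularities of the resulting immersion $\sqcup_i B_i \to B^4$. Internal identifications along attaching disks are not genuine singularities. The only remaining singularities arise at outer circles: there, a tube $3$--ball of some component $i$ meets an inner $3$--ball of some component $j$ transversally along a $2$--disk in $B^4$. I would then identify the two preimages of this $2$--disk in the abstract disjoint union $\sqcup_k B_k$: namely, a cross-section of the tube, whose boundary lies on the outer annulus of $i$ (which is part of $\partial B_i \subset T$), and a parallel disk inside the inner $3$--ball of $B_j$, whose boundary also maps to the outer annulus of $i$ but now sits in the interior of $B_j$---since the outer annulus only appears on the boundary of the tube $3$--ball in our gluing, and not on the boundary of the inner $3$--ball, even when $i=j$. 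This is precisely the structure of a ribbon singularity, and the associated transverse disk is a small disk parallel to the corresponding outer disk.

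The main obstacle is ensuring that the three-dimensional arrangement lifts to $B^4$ without producing spurious overlaps: when two inner spheres $\Sigma_i, \Sigma_j$ are nested in $\R^3$, their bounded $3$--balls would overlap on a $3$--dimensional region in $\R^3 \times \{0\}$. This is resolved by pushing the interior of each inner $3$--ball slightly into the fourth coordinate direction, at distinct heights for different components, while keeping its boundary fixed on $T$; the extra dimension afforded by $B^4$ makes the lifted inner $3$--balls pairwise disjoint outside of $T$, and the tube $3$--balls can be lifted compatibly using the over/under information at outer circles. The ribbon singularities at outer circles identified above are then the only singularities of $\bigcup_i B_i$, showing that $T$ is ribbon.
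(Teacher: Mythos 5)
Your overall strategy is the same as the paper's---cap the inner annuli with the attaching disks, invoke Schoenflies to get embedded $3$--balls in the projection, fill the outer annuli with solid tubes, and recognize the singularities created where a tube crosses the wall of an inner ball as ribbon singularities. The paper, however, inserts a normalization step that you skip: before lifting, it uses moves D and C (which are isotopies, by Lemma \ref{lem:PseudoRibbonMoves}) to push the outer annuli close to the walls of each inner ball and to cancel any pair of consecutive outer circles carrying the \emph{same} over/under information, so that what remains is a disjoint union of standard \emph{hooks} as in Figure \ref{fig:Hook}, for which the ribbon lift to $B^4$ is the obvious local model. Your proof replaces this normalization by the single sentence ``the tube $3$--balls can be lifted compatibly using the over/under information at outer circles'', and that sentence is precisely where the content of the lemma lives: two $3$--balls in $B^4$ generically meet along a surface, so one must actually exhibit a lift in which \emph{every} component of every intersection is a disk of the prescribed ribbon type, and this is not automatic from the $3$--dimensional picture.

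The concrete symptom of the gap is your description of the singular set: you assert that each outer circle produces one ribbon disk ``parallel to the corresponding outer disk''. With the natural lift you propose (inner balls flattened to pairwise distinct heights, tubes pushed off the walls according to the over/under data), the tube's cross-section \emph{at} an outer circle sits strictly above or strictly below the inner ball and therefore misses it; the forced intersections occur somewhere along the segment of the tube lying over the interior of $B_j^3$, and only when the over/under signs at the two ends of that segment disagree---a segment entering and leaving with the same sign can be lifted to miss $B_j$ altogether (this is exactly what move C encodes). So the singularities are neither located where you say nor in bijection with the outer circles, and the argument that they are all ribbon has to track how the height function of each tube interacts with the chosen heights of the inner balls (and of the other tubes, which may be nested in $\R^3$ even though the outer annuli are disjoint). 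None of this is unfixable---the direct lift can be carried out---but as written the verification of the ribbon condition, which is the whole point of the lemma, is asserted rather than proved; the paper's detour through moves C and D exists precisely to reduce that verification to a single standard local picture.
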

\begin{proof}
This can be seen as a consequence of \cite[Lem.~2.12]{ABMW}, but
it can also easily be shown directly as follows. 
Since the inner annuli,
when closed by the attaching disks, have no singularity, they bound
embedded 3--balls $B_1,\cdots,B_n\subset B^3$. 
Inside each $B_i$, the outer annuli
can be pushed close to $\p B_i$ thanks
to move D, and possibly pushed out using move C. We are then left with a finite number of
\emph{hooks}, as pictured in Figure \ref{fig:Hook}.
Such a diagram can be easily lifted to a ribbon surface in $B^4$. 
\end{proof}
\begin{figure}[h!]
  \[
  \includegraphics{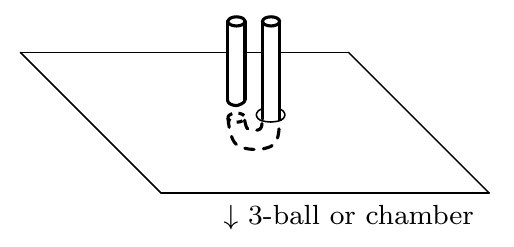}
  \]
  \caption{A hook between an outer annulus and either a 3--ball or a chamber}
  \label{fig:Hook}
\end{figure}

\subsection{Genericity of ribbon objects up to link-homotopy}

Ribbon objects are a very simple class of knotted surfaces. For genus
0 surfaces, they are known \cite[Thm.1]{Yaji} to correspond with
so-called \emph{simple} surfaces, which are surfaces admitting a projection with
only double points, and no triple nor branch point. For higher genus
surfaces, the ribbon class is even more restrictive, see {\it e.g.}
\cite[Ex.2.7]{CKS}. 
However, the next result shows that ribbon surfaces arise naturally when working up to link-homotopy.
\begin{theo}\label{thm:1}
Any $2$--string link is link-homotopic to a ribbon one. 
\end{theo}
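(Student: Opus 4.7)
The plan is to use the language of pseudo-ribbon diagrams to reduce the statement to the homotopy-triviality of $2$--links due to Bartels--Teichner. Given a $2$--string link $T$, I would first exhibit a pseudo-ribbon diagram for it: shrink tubular neighborhoods of $C_i\times\{0\}$ and $C_i\times\{1\}$ into very thin unknotted outer annuli attached to a $2$--link $L_T$ (the union of the inner annuli capped by attaching disks). Crucially, this initial diagram has singular locus entirely contained in the inner annuli part (the outer annuli are embedded and disjoint from everything else). The goal is then to modify this diagram, via link-homotopy, so that its singular locus becomes exactly a disjoint union of outer circles, which by Lemma \ref{lem:PseudoRibbon} certifies that the resulting $2$--string link is ribbon.

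By Bartels--Teichner \cite[Thm.~1]{BT}, the $2$--link $L_T$ is link-homotopically trivial. By Proposition \ref{prop:singularRosemanMoves}, this link-homotopy may be realized on singular broken surface diagrams as a finite sequence of Roseman moves (a)--(g) and singular Roseman \emph{self}-moves (h)--(j). The strategy is to replay this sequence on the pseudo-ribbon diagram for $T$, one move at a time, and after each step restore the pseudo-ribbon structure using only the local moves A, B$_1$, B$_2$, C, D (and ambient Roseman moves disjoint from outer annuli), which by Lemma \ref{lem:PseudoRibbonMoves} preserve the link-homotopy class.

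The main obstacle is that a Roseman or singular Roseman move on $L_T$ takes place in a small $3$--ball $N$ that may meet the outer annuli. The core of the argument is a local analysis: for each type of move, I would show that, up to isotopy and moves A, B$_1$, B$_2$, C, D applied beforehand, the outer annuli can always be pushed out of $N$, so that the move is performed in a ball disjoint from them. Concretely, any strand of an outer annulus entering $N$ can be pushed across the participating inner sheets using moves A, B$_1$, B$_2$ (which pass attaching disks and outer annuli across regular double-point lines), while the mutual positions of several outer strands can be rearranged using move D, and redundant parallel outer circles produced in the process can be cancelled with move C. Since the outer annuli are \emph{embedded and disjoint from the singular locus} (modulo outer circles), pushing them out never creates singular points on them---it can only create new outer circles, which is exactly what the pseudo-ribbon format allows. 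The link-homotopy hypothesis is essential here: because we only ever use singular \emph{self}-moves from (h)--(j) on $L_T$, no new singularity between distinct components is ever introduced on the inner annuli, so all new singular points remain of allowed type.

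After the entire Bartels--Teichner sequence is replayed in this fashion, the inner annuli, together with their attaching disks, form the trivial $2$--link, and so can be further isotoped (staying disjoint from outer annuli by additional applications of A, B$_1$, B$_2$, D) into $n$ standard embedded spheres bounding disjoint $3$--balls $B_1,\dots,B_n$. At this point the singular locus of the diagram is a disjoint union of outer circles, and Lemma \ref{lem:PseudoRibbon} concludes that the resulting $2$--string link---link-homotopic to $T$ by construction---is ribbon. The delicate step is the local case analysis ensuring that every Roseman move of \cite{Roseman} and every singular self-move (h), (i), (j) can indeed be cleared of outer annuli using only A, B$_1$, B$_2$, C, D; I would expect this to require enumerating the possible configurations of outer strands traversing the move region and checking each by hand.
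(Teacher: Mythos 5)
Your proposal follows the paper's own proof essentially verbatim: the same reduction to a pseudo-ribbon diagram, the same appeal to Bartels--Teichner combined with Proposition \ref{prop:singularRosemanMoves} to realize the nullhomotopy of the disk-closure as a sequence of Roseman and singular Roseman self-moves, the same strategy of clearing each move's supporting $3$--ball of outer annuli via moves A, $\textrm{B}_1$, $\textrm{B}_2$, C, D, and the same conclusion via Lemmas \ref{lem:PseudoRibbonMoves} and \ref{lem:PseudoRibbon}. The one step you defer---the case-by-case verification that every move can be so cleared---is precisely the bulk of the paper's argument, which organizes the moves by the dimension ($0$ through $3$) of their locus, the dimension-$3$ ``chamber'' cases being the only ones that genuinely require the hook-pushing combination of $\textrm{B}_1$, $\textrm{B}_2$ and C.
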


\begin{proof}
  Let $T$ be a $2$--string link described by a broken surface diagram
  $\tDD$. By conjugating $\tDD$ with two trivial broken surface
  diagrams, we obtain a new broken surface diagram $\DD$ for $T$
  endowed with a pseudo-ribbon structure, for which $\tDD$ corresponds to the inner
  annuli and
  $\sqcup_{i=1}^n\big(D_i\times\big\{\frac13\big\}\sqcup
  D_i\times\big\{\frac23\big\}\big)$
  are the attaching disks. Note in particular that (a
  smoothing of) the union of the inner annuli and the attaching disks
  of $\DD$ is a broken surface diagram for the disk-closure of $T$, 
  that we shall denote by $\DD_\Cl$.

  By Bartels--Teichner's theorem \cite[Thm.1]{BT}, the disk-closure of $T$ is known to be
  link-homotopic to the trivial  2--link. Using Theorem
  \ref{prop:singularRosemanMoves}, it follows that there exists a
  sequence of Roseman moves and singular Roseman self-moves which transforms $\DD_\Cl$
  into the trivial broken surface diagram. Our goal is now to show
  that this sequence can be performed on the inner annuli of $\DD$
  while preserving its pseudo-ribbon structure. By use of Lemma
  \ref{lem:PseudoRibbon}, it will follow that the resulting 2--string
  link is ribbon.

  Obstructions for realizing the above-mentioned sequence may arise
  only when outer annuli interact with the 3--ball supporting one of the moves. 
  More precisely, Roseman moves and singular Roseman moves occur in a 3--ball, 
  which is a neighborhood of their ``locus'' 
  (see Figure \ref{fig:RosemanMoves} for examples), 
  and they can be classified in 4 types, depending on the dimension of this locus: 
  \begin{description}
  \item[Dimension 0] for instance, move $\mvdir{c}$ occurs in a
    neighborhood of the point where the two branch points shall
    appear.  Up to isotopy, this point can be chosen outside the
    attaching and outer disks. Moves $\mvdir{i}$ and $\mvdir{j}$ are
    part of the same class, but are even easier since the considered
    points are on the double point locus of the inner annuli, so they
    can't be contained in an attaching or outer disk.
  \item[Dimension 1] for instance, move $\mvdir{a}$ occurs in a
    neighborhood of a path which joins the two points, one on each sheet,
    that will merge to produce the circle of double points. Up to
    isotopy, the endpoints of this path can be chosen outside the
    attaching and outer disks, and the path can be chosen outside the outer
    annuli. Moves $\mvdir{e}$ and $\mvdir{f}$ can be handled
    similarly, and moves $\mvdir{d}$, $\mvdirind{h}$, $\mvind{i}$ and
    $\mvind{j}$ are part of the same class but are even easier, since
    the considered paths are on the double point locus of inner
    annuli, so they can't interact with any attaching or outer disk.
  \item[Dimension 2] for instance, move $\mvdirind{b}$ occurs in a
    neighborhood of a disk $D$ along which one of the sheets will be
    pushed; the interior of $D$ is disjoint from the diagram and its
    boundary is the union of two segments, one on each sheet. Up to
    isotopy, $\p D$ can be chosen outside the attaching and outer
    disks, and then the outer annuli which intersect $D$ can be pushed
    away using move C, as illustrated in Figure \ref{fig:Dim2Locus}. 
    Move $\mvind{d}$ can be handled similarly.
    \begin{figure}[h!]
      \[
      \vcenter{\hbox{\includegraphics{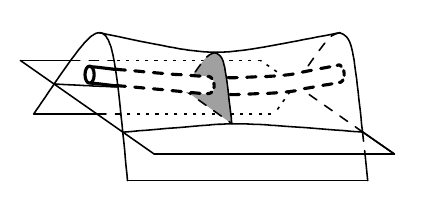}}} \ \leadsto\
      \vcenter{\hbox{\includegraphics{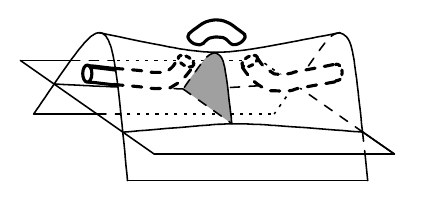}}}
      \]
      \caption{Avoiding a dimension two locus in a Roseman move $\mvdirind{b}$}
      \label{fig:Dim2Locus}
    \end{figure}
  \item[Dimension 3] for instance, move $\mvind{a}$ occurs in a
    neighborhood of a 3--dimensional \emph{chamber}, bounded by two pieces of
    sheet that we shall call \emph{walls}. Any attaching disk lying in
    one of these walls may be pushed away using move A. If an outer
    annulus enters and leaves the chamber through distinct walls, then
    moves $\textrm{B}_1$ and $\textrm{B}_2$ can be used to cut it in several pieces, each entering and
    leaving the chamber through the same wall; then thanks to move D, the
    outer annuli inside the chamber can be pushed close to the walls and
    possibly pushed out using move C. We are then left with
    hooks as in Figure \ref{fig:Hook}. Such hooks can
    be pushed out of the chamber using a combination of moves $\textrm{B}_1$, $\textrm{B}_2$ and C,
    as illustrated in Figure \ref{fig:Dehooking}. Moves $\mvind{c}$, $\mvind{f}$ and
    $\mvdirind{g}$ can be handled similarly. Move $\mvind{e}$ is also
    similar, but with three chambers, so one has to take care of
    emptying them successively in the right order.
    \begin{figure}
      \[
      \xymatrix@!0@R=2cm@C=5.5cm {
        \vcenter{\hbox{\includegraphics{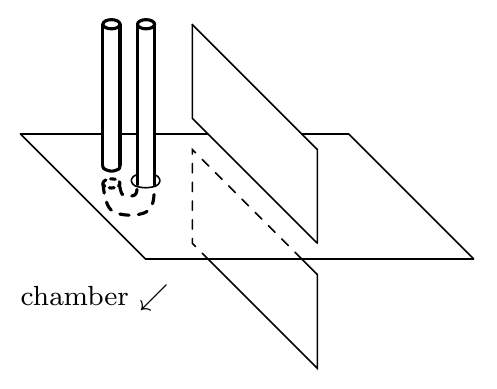}}}
        \ar@{}[r]^{\textrm{B}_1}|{\longrightarrow} &
        \vcenter{\hbox{\includegraphics{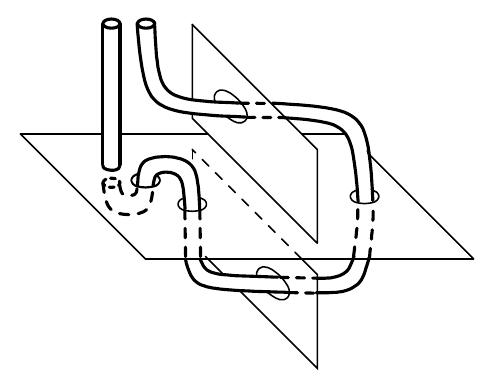}}}
        \ar@{}[rd]^{\textrm{C}}|{\rotatebox{330}{$\longrightarrow$}}&\\
        &&\vcenter{\hbox{\includegraphics{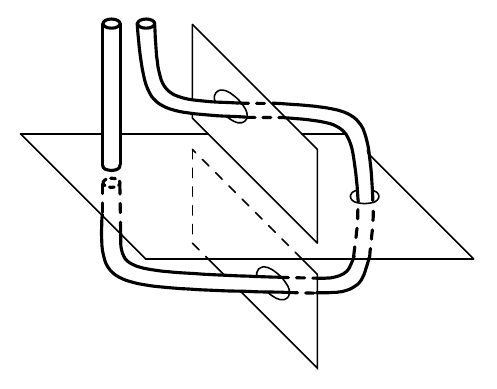}}}\ar@{}[ld]^{\textrm{B}_2}|{\rotatebox{30}{$\longleftarrow$}}\\
        \vcenter{\hbox{\includegraphics{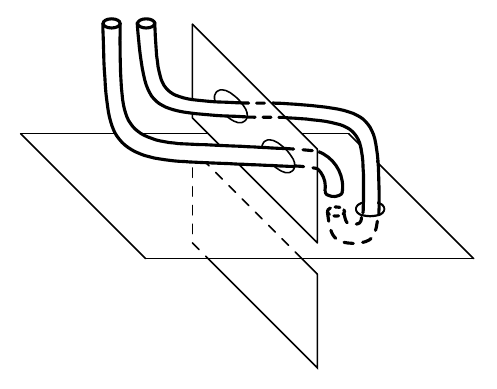}}} &
        \vcenter{\hbox{\includegraphics{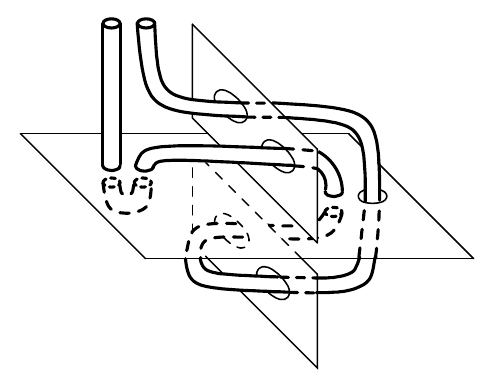}}}
        \ar@{}[l]^{\textrm{C}}|{\longleftarrow}& }
      \]
      \caption{Pushing a hook out of a chamber}
      \label{fig:Dehooking}
    \end{figure}
  \end{description}
  As a result of this discussion, up to moves A, $\textrm{B}_1$, $\textrm{B}_2$, C and D, every Roseman move and singular Roseman move can be
  performed away from the outer annuli. It  follows then from Lemma \ref{lem:PseudoRibbonMoves} that the
  pseudo-ribbon structure can be preserved all along the sequence.
\end{proof}

\section{Classification of $2$--string links up to link-homotopy}
\label{sec:part2}

Given any group $G$ with a fixed normal set of generators, we define the \emph{reduced group of $G$}, denoted by $\textrm{R}G$, 
as the smallest quotient where each generator commutes with all its
conjugates.
As we shall see, the reduced fundamental group of any $n$--component
2--string link $T$
is isomorphic to $\RFn$, where $\Fn$ is the free group generated by either the top or 
the bottom meridians of $T$.

We show in this section that, up to link-homotopy, 2--string links are actually classified by the
data of their longitudes in $\RFn$. In the literature, this invariant
appears in two different forms, either as an action on $\RFn$ or as $4$-dimensional Milnor invariants. 
We first review these two approaches, see \cite[\S~2.2.1.3 and \S~6.1]{ABMW} for more details.

\subsection{Conjugating automorphisms and Milnor invariants}
\label{sec:lhinvariants}
Let $T$ be a 2--string link with $n$ components $T_1,\cdots,T_n$, and $X(T)$ be the complement of a tubular
neighborhood of $T$. There are natural inclusions
$\iota_0,\iota_1: B^3\setminus \{ C_1 ,\cdots ,C_n\} \hookrightarrow X(T)$
which come, respectively, from the embedding of the
bottom and the top boundaries inside $B^3\times I$. It is straightforwardly checked that they induce isomorphisms at both the $H_1$ and $H_2$ levels, 
which by a theorem of Stallings  \cite[Thm.~5.1]{stallings} implies that they induce isomorphisms at the level of each nilpotent quotients\footnote{Recall that the nilpotent quotients of a group $G$ are defined by $\frac{G}{\Gamma_k G}$, where $\{\Gamma_k G\}_k$ is the lower central series of $G$.} 
of the fundamental groups. 
Now, the fundamental group of $B^3\setminus \{ C_1 ,\cdots ,C_n\}$
identifies with the free group $\Fn$ generated by the meridians $m_1,\cdots,m_n$
and, by \cite[Lem.~1.3]{HL}, the $k$th nilpotent quotient of $\Fn$ is equal to $\RFn$ for all $k\ge n$.
As a consequence, $\iota_0$ and $\iota_1$ induce isomorphisms
$\iota_0^*$ and $\iota_1^*$ at the level of the reduced fundamental group. 
By taking the composition ${\iota^*_0}^{-1}\circ\iota^*_1$, we can thus associate an automorphism of $\RFn$ to the $2$--string link $T$. 
It is easily seen that this assignment defines a monoid homomorphism 
$$ \varphi: \textrm{SL}^{2}_n \rightarrow \rm{Aut}_C(\RFn),$$
where $\rm{Aut}_C(RF_n)$ is the subgroup of \emph{conjugating automorphisms} of $\RFn$, mapping each generator to a conjugate of itself. 
More precisely, $\varphi(T)$ maps the $i$th generator to its conjugate
by (the image by $\iota^*_0$ of) \emph{any} $i$th longitude $\lambda_i$ for $T_i$. 

We now recall the definition of the non-repeated
$\mu^{(4)}$--invariants. 
For each $i\in\{1,\cdots,n\}$, we denote by
$\mathcal{S}^{(i)}_{n-1}:=\mathbb{Z}\langle\langle
X_1,\cdots,\hat{X}_i,\cdots,X_n\rangle\rangle$ the ring of formal
power series in $(n-1)$ non-commutative variables, 
and by $\F_{n-1}^{(i)}\cong\F_{n-1}$ the subgroup of $\Fn$
generated by all but the $i$th generator of $\Fn$. We also denote by $E_i:\F_{n-1}^{(i)}\rightarrow \mathcal{S}_{n-1}^{(i)}$ the Magnus
expansion, which is the group homomorphism
sending the $j$th generator to $1 + X_j$. 
This map descends to a well
defined homomorphism $E_i^h$ from $\RF^{(i)}_{n-1}$ to
$\fract{\mathcal{S}^{(i)}_{n-1}}{I_r}$, where $I_r$ is the ideal generated by monomials with repetitions. 
The \emph{$4$--dimensional Milnor invariant  $\mu^{(4)}_I(T)$ of $T$} is defined, 
for each sequence $I=i_1 \cdots i_k i$ of pairwise distinct integers in $\{1,\cdots,n\}$, 
 as the coefficient of the monomial $X_{i_1}\cdots 
 X_{i_k}$ in $E_i^h(\widetilde{\lambda}_i)$, where $\widetilde{\lambda}_i\in\RF^{(i)}_{n-1}$ is a 
 longitude for $T_i$ seen in the complement of $T\setminus T_i$. This is well
 defined since all longitudes for $T_i$ differ by some power of $m_i$, and are hence isotopic in the complement
 of $T\setminus T_i$. In particular, $\widetilde{\lambda}_i$ seen in $\RFn$ is actually an $i$th longitude, and it can reciprocally be obtained from any $i$th longitude by removing all $m_i$--factors.

\begin{lemme}\label{lem:equiv_inv}
 Two $2$--string links $T_1$ and $T_2$ have same Milnor $\mu^{(4)}$-invariants with non repeating indices if and only if $\varphi(T_1)=\varphi(T_2)$. 
\end{lemme}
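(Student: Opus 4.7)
The plan is to introduce an intermediate condition in terms of the \emph{reduced longitudes} $\tilde{\lambda}_i \in \RF_{n-1}^{(i)}$, and to establish the chain of equivalences
\begin{equation*}
\varphi(T_1)=\varphi(T_2) \ \Longleftrightarrow\ \tilde{\lambda}_i^{T_1}=\tilde{\lambda}_i^{T_2}\text{ in }\RF_{n-1}^{(i)}\text{ for all }i \ \Longleftrightarrow\ \mu^{(4)}_I(T_1)=\mu^{(4)}_I(T_2)\text{ for all non-repeating }I.
\end{equation*}

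For the first equivalence, since $\varphi(T)$ is a conjugating automorphism sending $m_i$ to its conjugate by $\lambda_i$, the identity $\varphi(T_1)=\varphi(T_2)$ reduces to the condition that, for every $i$, the elements $\lambda_i^{T_1}$ and $\lambda_i^{T_2}$ induce the same inner automorphism on $m_i$, or equivalently that $(\lambda_i^{T_1})^{-1}\lambda_i^{T_2}$ lies in the centralizer $C_{\RF_n}(m_i)$. The key observation, standard for reduced free groups, is that this centralizer coincides with the normal closure $\langle\!\langle m_i\rangle\!\rangle$. The inclusion $\supseteq$ is immediate from the defining reduced relation: each conjugate $g m_i g^{-1}$ commutes with $m_i$, and so does any product thereof. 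The reverse inclusion follows from the semi-direct product splitting $\RF_n=\langle\!\langle m_i\rangle\!\rangle\rtimes\RF_{n-1}^{(i)}$, combined with the fact that $\langle\!\langle m_i\rangle\!\rangle$ is abelian (again by the reduced relation); writing $g=n\cdot h$ with $n\in\langle\!\langle m_i\rangle\!\rangle$ and $h\in\RF_{n-1}^{(i)}$, the centralizer condition reduces to showing that no non-trivial $h\in\RF_{n-1}^{(i)}$ centralizes $m_i$. Since $\langle\!\langle m_i\rangle\!\rangle$ is exactly the kernel of the natural projection $\RF_n\twoheadrightarrow\RF_{n-1}^{(i)}$, the condition $(\lambda_i^{T_1})^{-1}\lambda_i^{T_2}\in\langle\!\langle m_i\rangle\!\rangle$ translates exactly into $\tilde{\lambda}_i^{T_1}=\tilde{\lambda}_i^{T_2}$ in $\RF_{n-1}^{(i)}$.

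For the second equivalence, by the very definition of $\mu^{(4)}$, the invariants $\mu^{(4)}_{i_1\cdots i_k i}(T)$ with pairwise distinct $i_1,\ldots,i_k,i$ are precisely the coefficients of the non-repeating monomials $X_{i_1}\cdots X_{i_k}$ in $E_i^h(\tilde{\lambda}_i)$. Since these monomials form an additive basis of $\mathcal{S}_{n-1}^{(i)}/I_r$, the data of all such invariants with fixed last index $i$ is equivalent to the data of $E_i^h(\tilde{\lambda}_i)$. The conclusion then follows from the classical injectivity of the Magnus expansion $E_i^h\colon\RF_{n-1}^{(i)}\hookrightarrow\mathcal{S}_{n-1}^{(i)}/I_r$, itself a well-known consequence of the residual nilpotence of the reduced free group.

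The main obstacle is the identification $C_{\RF_n}(m_i)=\langle\!\langle m_i\rangle\!\rangle$: although one inclusion is immediate from the reduced relations, the reverse requires genuine use of the structure of $\RF_n$ via the semi-direct product decomposition above. The injectivity of the Magnus expansion, though technically non-trivial, is a classical result and is entirely orthogonal to the specific geometry of $2$--string links; everything else in the argument amounts to unpacking the definitions of $\varphi$ and of $\mu^{(4)}$.
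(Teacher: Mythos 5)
Your overall route is the same as the paper's: both directions pass through the reduced longitudes $\widetilde{\lambda}_i\in\RF_{n-1}^{(i)}$, the passage from the non-repeated $\mu^{(4)}$--invariants to $\widetilde{\lambda}_i$ resting on the injectivity of $E_i^h$ (which the paper also invokes, with a reference), and the passage from $\varphi(T)$ back to the $\widetilde{\lambda}_i$ being the real content. For that last step the paper simply cites \cite[Lem.~4.25]{ABMW}, whereas you attempt a self-contained proof via the identification $C_{\RF_n}(m_i)=\langle\!\langle m_i\rangle\!\rangle$; that is a legitimate and more transparent route, and your reduction of the problem is correct.

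However, as written this part has a gap. The splitting $\RF_n=\langle\!\langle m_i\rangle\!\rangle\rtimes\RF_{n-1}^{(i)}$ together with the abelianity of $\langle\!\langle m_i\rangle\!\rangle$ only \emph{reduces} the reverse inclusion to the claim that no non-trivial $h\in\RF_{n-1}^{(i)}$ centralizes $m_i$; you state this claim and then move on without proving it, and it is not a formal consequence of the splitting ($\RF_n$ is nilpotent, and in nilpotent groups centralizers are typically large, so something specific to $\RF_n$ must intervene). The cleanest fix uses the same tool you already rely on: if $h\in\RF_{n-1}^{(i)}$ is non-trivial, let $h_d\neq 0$ be the lowest-degree homogeneous part of $E_i^h(h)-1$ (non-zero by injectivity of $E_i^h$); a direct computation in the reduced Magnus algebra shows that the lowest-degree part of the expansion of $[h,m_i]$ is $h_dX_i-X_ih_d$, which cannot vanish since every monomial of $h_dX_i$ ends with $X_i$ while every monomial of $X_ih_d$ begins with it, and all these monomials are non-repeating; hence $[h,m_i]\neq 1$. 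Alternatively, one can quote the description of $\langle\!\langle m_i\rangle\!\rangle$ as a free abelian group on conjugates of $m_i$ on which $\RF_{n-1}^{(i)}$ acts with trivial stabilizer of $m_i$, in the spirit of \cite[Lem.~1.3]{HL}. With this sub-claim supplied, the rest of your argument (including the second equivalence, which matches the paper's) is correct.
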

\begin{proof}
 It is well known that $E_i^h$ is actually injective, see for example \cite[Thm.~7.11]{ipipipyura} for a proof. 
 From Milnor invariants, one can hence recover
 longitudes seen in $\RF_{n-1}^{(i)}\subset\RFn$ and hence the associated conjugating
 automorphism of $\RFn$.
 Reciprocally, it follows from \cite[Lem.4.25]{ABMW} that, for each $i\in\{1,\cdots,n\}$, one can
 extract $\widetilde{\lambda}_i\in\RF^{(i)}_{n-1}$ from the conjugating automorphism and hence recover Milnor invariants.
\end{proof}

These invariants are actually invariant under link-homotopy and this can be proven in several ways.
One can show directly the homotopy invariance of Milnor invariants, as in 
\cite{Milnor2}, using the effect of a finger or a cusp self-move at the level of $\pi_1$, see \emph{e.g.} \cite{Kirby}. 
Another approach shows the invariance of $\varphi$, in the spirit of \cite{HL,ABMW}, by considering the complement of a link-homotopy in $5$-space.
In the next section, we provide a third and less standard proof which relies on a notion of colorings for broken surface diagrams. 

\subsection{Colorings of broken surface diagrams}

As already seen, a (singular) broken surface diagram $D$ is an immersed oriented surface in
$\R^3$, with small bands removed to indicate the different projecting
heights of the sheets. We define the \emph{regions of $D$} as the
connected components of $D$ considered with these small bands and the singular points 
removed. Locally, there are hence three regions near a regular double
point,
seven near a triple point, two near a singular and only one near a branch
point; several of these local regions can however be the same if they
are otherwise connected. Now, let $p$ be a regular double point of $D$, and
denote by $S_{\! o}$ and $S_{\! u}$ the sheets of $D$ that meet at $p$
such that $S_{\! o}$ is over $S_{\! u}$ with respect to the projection. We shall
call \emph{over-region of $p$} the region which belongs to $S_{\! o}$, and
\emph{under-regions of $p$} the other two. An 
under-region shall moreover be called \emph{positive} or
\emph{negative}, depending on whether a basis of $T_p\R^3$ made of a
positive basis for $T_pS_{\! o}$ concatenated with a vector of $T_pS_{\! u}$
which points to the considered under-region, is positive or negative,
see Figure \ref{fig:Regions} for an illustration. We also call
\emph{$i$th bottom and top regions} the unique regions that contain,
respectively, $C_i\times\{0\}$ and $C_i\times\{1\}$ on their boundary.
In the following, and
for any $a,b\in\RFn$, we
shall denote by $a^b:=b^{-1}ab$ the conjugate of $a$ by $b$.

\begin{figure}[h!]
  \[
\includegraphics{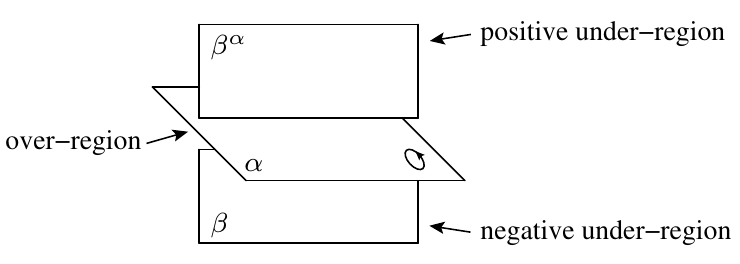}
  \]
  \caption{Regions near a regular double point}
\label{fig:Regions}
\end{figure}

\begin{defi}
  An \emph{$\RFn$--coloring} of a (singular) broken surface diagram $D$
  for a (singular) 2--string link is a labelling of the regions of $D$ by
  elements of $\RFn$ such that:
  \begin{itemize}
  \item the $i$th bottom region is
    labelled by the $i$th generator of $\RFn$;
  \item near a regular double point $p$, we have 
    $\lambda_p^+=(\lambda_p^-)^{\lambda_p^0}$ where $\lambda_p^0$, $\lambda_p^+$ and
    $\lambda_p^-$ are, respectively, the labels of the over, the positive under and the
    negative under-regions of $p$.
  \end{itemize}
\end{defi}
In particular, there is no condition assigned to singular, triple
nor branch points.
\begin{fundex}\label{ex:Wirtinger}
The construction of the invariants given in the previous section
contains, as a by-product, the fact that the reduced fundamental group
of a 2--string link is $\RFn$. On the other hand, a Wirtinger
presentation of the fundamental group can be given from a broken
surface diagram, see {\it e.g.} \cite[\S~3.2.2]{CKS}, showing that the three
meridians near a regular double point satisfy the very same relation
as for $\RFn$--colorings. It follows that labelling every region by
their corresponding reduced meridians seen in $\RFn$ defines an 
$\RFn$--coloring, that we shall call the \emph{Wirtinger coloring}. The
conjugating automorphism associated to the 2--string link can, in
particular, be deduced from the Wirtinger coloring, since it sends the
$i$th generator to the label of the $i$th top region.
\end{fundex}

An $\RFn$--coloring is merely an example of surface diagram coloring, as considered for example in \cite[\S~4.1.3]{CKS}. 
As explained there (see also \cite{Rosicki1998}) $\RFn$--colorings are preserved by Roseman moves (a)--(g), in the sense that the data of the labels 
on the boundary of a 3--ball supporting such a move is sufficient to
recover in a unique and consistent way the whole labelling inside the 3--ball; this is also clear for 
 singular Roseman self-moves. 
We thus have the following: 
\begin{lemme}\label{lem:UniqueColor}
  The number of possible $\RFn$--colorings for a (singular) broken
  surface diagram of a (singular) 2--string link is invariant under Roseman (and singular Roseman) moves.
\end{lemme}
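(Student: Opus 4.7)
The plan is to establish the invariance of the coloring count under each Roseman or singular Roseman move separately. For any such move, the two singular broken surface diagrams differ only inside a $3$--ball $B$, and coincide on its complement. I will show that any $\RFn$--coloring of the complement of $B$ extends in a unique and consistent way to a coloring on each side of the move, thereby producing a bijection between $\RFn$--colorings of the two diagrams.

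For the classical Roseman moves (a)--(g), this is the content of the general theory of quandle colorings of broken surface diagrams, as developed in \cite[\S~4.1.3]{CKS} and \cite{Rosicki1998}: the relation $\lambda_p^+ = (\lambda_p^-)^{\lambda_p^0}$ at a regular double point is exactly the defining relation for a coloring by the conjugation quandle on $\RFn$, and the unique extendability of boundary colorings through a $3$--ball supporting any such move is a direct consequence of the quandle axioms.

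For the singular Roseman moves (h), (i), (j), I would proceed case by case. The crucial observation is that $\RFn$--colorings impose no constraint at singular, triple, nor branch points; the only constraints are the Wirtinger-type relations at regular double points. Hence any coloring of $\p B$ propagates along sheets to the interior of $B$, and the only possible obstruction is a consistency failure at some regular double point inside $B$. For move (i), a coloring extends uniquely on each side since, along a line of regular double points, the Wirtinger relation $\lambda^+ = (\lambda^-)^{\lambda^0}$ simply swaps the roles of over- and under-sheets across a singular point, which yields the equivalent inverse relation. Move (h) reduces to the standard Reidemeister-III-type compatibility at a triple point, expressing that the concatenated Wirtinger relations around the triple point compose to the identity in $\RFn$. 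Move (j) involves a branch point which absorbs one of the double point arcs, so the consistency is immediate.

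The main obstacle in this case analysis is ensuring that the orientation conventions---positive versus negative under-region, determined by the combined orientations of the two sheets and of the ambient space---yield the correct Wirtinger relation on each side of the singular Roseman moves. This requires a careful inspection of each local picture in Figure \ref{fig:SingularRosemanMoves} (and its mirror image), but involves no conceptual difficulty beyond a careful bookkeeping of the labels.
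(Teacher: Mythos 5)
Your overall strategy---a local extension/bijection argument through the $3$--ball supporting each move, with the classical moves (a)--(g) delegated to the general theory of quandle colorings as in \cite[\S~4.1.3]{CKS} and \cite{Rosicki1998}---is exactly the route the paper takes (the paper in fact gives no more detail than this for the singular moves, simply asserting that the claim ``is also clear for singular Roseman self-moves''). So for moves (a)--(g) there is nothing to object to.

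There is, however, a genuine gap in your treatment of the singular move (i), and it is precisely where the content of the lemma lies. You assert that across a singular point ``the Wirtinger relation simply swaps the roles of over- and under-sheets, which yields the equivalent inverse relation'', but the real issue is that the \emph{region decomposition} changes across the move. On the side without singular points, the under-sheet is broken along the whole arc of double points and contributes two local regions $b^{+}$, $b^{-}$ subject to $b^{+}=(b^{-})^{a}$, where $a$ labels the over-sheet. On the side with the two singular points, the formerly-under sheet is broken only along two slits running from the boundary of the ball to the singular points, so its two boundary arcs now lie in a \emph{single} region; likewise the formerly-over sheet is broken only along an interior arc and still forms one region. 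The extension condition on that side is therefore $b^{+}=b^{-}$ together with $[a,b^{\pm}]=1$ (each under-region of the outer segments coincides with the other, and the middle segment forces $a=a^{b}$). These two conditions are \emph{not} equivalent for arbitrary labels in $\RFn$, so the coloring counts would in general differ. They do agree exactly because (1) the move is a \emph{self}-move, so both sheets belong to the same component $i$, and (2) in any $\RFn$--coloring every region of the $i$th component is labelled by a conjugate of the $i$th generator (propagate from the bottom region through the double-point relations), and any two conjugates of a generator commute in the reduced free group by its defining relations. Your proposal never invokes the self-move restriction nor the relations of $\RFn$---indeed you claim invariance under all singular Roseman moves, whereas the statement is only needed, and only true by this argument, for self-moves. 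A similar (if milder) caution applies to your dismissal of move (j) as ``immediate''. Without these two ingredients the case analysis for the singular moves does not close.
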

It turns out that this number is always one for embedded 2--string links: 
\begin{prop}\label{prop:WirtingerForEver}
 The Wirtinger coloring is the unique $\RFn$--coloring for any broken surface diagram of a 2--string link.
\end{prop}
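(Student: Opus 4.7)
The plan is to recast an $\RFn$-coloring as a group homomorphism from $\pi_1(X(T))$ to $\RFn$, and then exploit the isomorphism of reduced fundamental groups already established in Section \ref{sec:lhinvariants}. The Wirtinger presentation of $\pi_1(X(T))$ read off from a broken surface diagram $D$ has one generator per region and, at each regular double point, one relation of the form $g^+ = (g^-)^{g^0}$, which is literally the defining coloring condition. Consequently, giving an $\RFn$-coloring of $D$ is the same data as giving a group homomorphism $\varphi \colon \pi_1(X(T)) \to \RFn$ that sends the generator corresponding to the $i$-th bottom region to the standard generator $m_i$ of $\RFn$. The Wirtinger coloring $W$ of Fundamental Example \ref{ex:Wirtinger} is such a homomorphism, and the proposition reduces to showing that $\varphi = W$ for every such $\varphi$.

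The first step is to show that $\varphi$ descends to the reduced fundamental group $\textrm{R}\pi_1(X(T))$. Propagating the coloring relation starting from the bottom regions, one sees by induction that every region of the $i$-th component of $D$ is labelled by a conjugate of $m_i$; equivalently, $\varphi$ sends every meridian of $T_i$ to a conjugate of $m_i$ in $\RFn$. Since any two conjugates of $m_i$ commute by definition of the reduced quotient, the defining relator $[g,\,hgh^{-1}]$ of $\textrm{R}\pi_1(X(T))$, with $g$ a meridian and $h \in \pi_1(X(T))$ arbitrary, is sent to $1$ by $\varphi$. Hence $\varphi$ factors as $\varphi = \overline{\varphi} \circ \pi$ through the canonical projection $\pi \colon \pi_1(X(T)) \twoheadrightarrow \textrm{R}\pi_1(X(T))$, for a unique homomorphism $\overline{\varphi} \colon \textrm{R}\pi_1(X(T)) \to \RFn$.

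The second step is to use the isomorphism $\iota_0^* \colon \RFn \xrightarrow{\sim} \textrm{R}\pi_1(X(T))$ recalled in Section \ref{sec:lhinvariants}. The composition $\overline{\varphi} \circ \iota_0^*$ is an endomorphism of $\RFn$; the bottom-region constraint in the definition of a coloring forces it to send each standard generator $m_i$ to itself, and since the $m_i$'s generate $\RFn$, this endomorphism must be the identity. Unravelling, $\overline{\varphi} = (\iota_0^*)^{-1}$, so $\varphi$ is the composition of $\pi$ with $(\iota_0^*)^{-1}$, which is exactly the Wirtinger coloring $W$.

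I expect the main technical point to be the propagation argument that every region carries a conjugate of some $m_i$: this is an induction along paths on the surface crossing the double-point curves, and care must be taken that triple points, branch points and (in the singular setting) singular points do not break the induction, since they impose no coloring relation but still separate regions. Once this is in place, the descent to $\textrm{R}\pi_1(X(T))$ and the invocation of the Stallings-type isomorphism from Section \ref{sec:lhinvariants} close the argument without further computation.
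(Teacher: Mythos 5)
Your proof is correct, but it takes a genuinely different route from the one in the paper. You recast an $\RFn$--coloring as a group homomorphism $\pi_1(X(T))\to\RFn$ via the Wirtinger presentation, show by propagating the coloring relation that every region meridian of the $i$th component is sent to a conjugate of $m_i$ (so that the reduced relators $[g,g^h]$ die and the homomorphism factors through $\textrm{R}\pi_1(X(T))$), and then use the Stallings-type isomorphism $\iota_0^*$ of Section \ref{sec:lhinvariants} together with the bottom-region normalization to force the factored map to be $(\iota_0^*)^{-1}$. The paper argues quite differently: it combines Theorem \ref{thm:1} and Proposition \ref{prop:singularRosemanMoves} with Lemma \ref{lem:UniqueColor} to reduce the count of colorings to the ribbon case, then transports the question through the Tube map to the uniqueness of colorings of Gauss diagrams established in \cite{ABMW}. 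Your argument is more self-contained and direct: it uses only the Wirtinger presentation theorem for broken surface diagrams (already invoked in Example \ref{ex:Wirtinger}) and the isomorphism $\iota_0^*$, and in particular it is logically independent of Theorem \ref{thm:1} and of the external Gauss-diagram results of \cite{ABMW}; it also applies to an arbitrary diagram without any Roseman-move bookkeeping. The paper's route, on the other hand, fits the global strategy of reducing everything to ribbon objects and reuses combinatorial facts already proven elsewhere. The one technical point you flag --- that every region of the $i$th component carries a conjugate of $m_i$ --- is indeed unproblematic for embedded diagrams: the regions of a given (connected) annulus are separated only by under-crossing bands, across each of which the label changes by a conjugation, so an induction along a path from the $i$th bottom region suffices, and triple or branch points impose no relation while leaving the relevant adjacency graph connected.
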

\begin{proof}
It follows from Theorem \ref{thm:1} and Proposition \ref{prop:singularRosemanMoves} that any 2--string link has the same
number of $\RFn$--colorings as a ribbon one. 
By \cite[Cor.~4.34]{ABMW}, the Tube map, defined in Section \ref{sec:codimension2} below, induces a bijection between ribbon $2$-string links up to link-homotopy and Gauss diagrams up to self-arrow moves (see  \cite[Def.~4.1 and 4.8]{ABMW}). 
It is a consequence of the definitions that the colorings of a Gauss diagram, as
defined in \cite[\S~4.4.1]{ABMW}, are in one-to-one correspondence 
with the $\RFn$--colorings of the corresponding broken surface diagram. 
Since Gauss diagrams admit a unique coloring by \cite[Lem.4.20]{ABMW}, we obtain the result. 
\end{proof}

\subsection{Classification}
As noticed in the previous section, a (singular) Roseman (self-)move modifies a given
$\RFn$--coloring only inside the ball which supports the move. Labels
of the top regions, in particular, are not modified. By unicity of the
$\RFn$--coloring, claimed in Proposition \ref{prop:WirtingerForEver}, and according to the end of Example
\ref{ex:Wirtinger}, we obtain  the following as a corollary of Proposition \ref{prop:singularRosemanMoves}:
\begin{prop}\label{prop:lh}
Two link-homotopic $2$--string links induce the same conjugating
automorphism, and hence have same Milnor invariants $\mu^{(4)}_I$ for any non-repeating sequence $I$.
\end{prop}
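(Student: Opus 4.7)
The plan is to deduce Proposition \ref{prop:lh} as a direct corollary of the diagrammatic machinery set up in this section, together with Proposition \ref{prop:singularRosemanMoves}. First, I would invoke Proposition \ref{prop:singularRosemanMoves}: any link-homotopy between two $2$--string links $T$ and $T'$ is realized, at the level of broken surface diagrams, by a finite sequence of Roseman moves (a)--(g) and singular Roseman self-moves (h)--(j). It therefore suffices to show that the conjugating automorphism $\varphi$ is unchanged under each such local move; the statement about Milnor invariants will then follow at once from Lemma \ref{lem:equiv_inv}.

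The key observation is the following. Let $D$ be a broken surface diagram of a $2$--string link $T$. By Proposition \ref{prop:WirtingerForEver}, $D$ admits a unique $\RFn$--coloring, which by Example \ref{ex:Wirtinger} must be the Wirtinger coloring. In particular, the label attached to the $i$-th top region of $D$ is precisely $\varphi(T)(m_i)$, the image of the $i$-th generator of $\RFn$ under the conjugating automorphism.

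Now suppose that $D$ and $D'$ differ by a single Roseman move or singular Roseman self-move supported in a $3$-ball $B$. As recalled in the paragraph just preceding the statement, such a move modifies the $\RFn$--coloring only inside $B$: the labels of the regions lying in the complement of $B$ are unchanged. Since the $i$-th top region contains $C_i\times\{1\}$ on its boundary and $B$ sits in the interior of the diagrammatic $3$-space, the top regions can always be taken to lie outside $B$. Their labels are therefore the same in the (unique) colorings of $D$ and $D'$, which gives $\varphi(T)(m_i)=\varphi(T')(m_i)$ for every $i$, and hence $\varphi(T)=\varphi(T')$. Lemma \ref{lem:equiv_inv} then yields equality of all non-repeated Milnor $\mu^{(4)}$--invariants.

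There is no substantial obstacle left at this stage: the conceptual work has been carried out in Theorem \ref{thm:1}, Proposition \ref{prop:singularRosemanMoves} and Proposition \ref{prop:WirtingerForEver}. The only point requiring a line of justification is that the top regions can indeed be placed outside the support of each move, which is a standard general position argument.
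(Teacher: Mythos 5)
Your argument is correct and is essentially identical to the paper's: the authors also deduce the statement from Proposition \ref{prop:singularRosemanMoves} together with the uniqueness of the $\RFn$--coloring (Proposition \ref{prop:WirtingerForEver}), observing that each (singular) Roseman (self-)move only alters labels inside its supporting ball and hence fixes the top-region labels, which determine $\varphi$; the claim on Milnor invariants then follows from Lemma \ref{lem:equiv_inv}.
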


Consequently, the map $\varphi$ factors through the quotient of $\textrm{SL}^{2}_n$ up to link-homotopy. 
Now, this same map was shown in \cite{ABMW} to classify ribbon $2$--string links up to link-homotopy. 
An immediate consequence of Theorem \ref{thm:1} and \cite[Thm.~2.34]{ABMW} is thus the following:
\begin{prop}\label{prop:aut}
  The map $\varphi$ induces a group isomorphism between link-homotopy classes of $2$--string links and $\rm{Aut}_C(RF_n)$. 
\end{prop}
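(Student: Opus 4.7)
The plan is to deduce the proposition directly from Theorem \ref{thm:1}, Proposition \ref{prop:lh}, and \cite[Thm.~2.34]{ABMW}. By Proposition \ref{prop:lh}, the monoid homomorphism $\varphi$ is invariant under link-homotopy, so it descends to a well-defined monoid homomorphism $\bar{\varphi}\colon \textrm{SL}^{2}_n/{\sim_{\textrm{lh}}} \to \textrm{Aut}_C(\RFn)$, where $\sim_{\textrm{lh}}$ denotes the link-homotopy relation. Once bijectivity of $\bar{\varphi}$ is established, the fact that it is a \emph{group} isomorphism follows automatically, since the target is a group and $\bar{\varphi}$ is a monoid morphism; in particular, this endows the set of link-homotopy classes of $2$--string links with a group structure.

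For surjectivity, observe that the restriction of $\varphi$ to ribbon $2$--string links, modulo the equivalence relation generated by self-circle crossing changes, is already surjective onto $\textrm{Aut}_C(\RFn)$ by \cite[Thm.~2.34]{ABMW}. Since a self-circle crossing change is a particular link-homotopy, and since every ribbon $2$--string link is in particular a $2$--string link, this surjectivity propagates to $\bar{\varphi}$.

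For injectivity, suppose $T_1$ and $T_2$ are $2$--string links with $\varphi(T_1) = \varphi(T_2)$. By Theorem \ref{thm:1}, there exist ribbon $2$--string links $T_1'$ and $T_2'$ link-homotopic to $T_1$ and $T_2$, respectively. Proposition \ref{prop:lh} then gives $\varphi(T_i') = \varphi(T_i)$, so $\varphi(T_1') = \varphi(T_2')$. The classification of \cite[Thm.~2.34]{ABMW} therefore produces a sequence of isotopies and self-circle crossing changes from $T_1'$ to $T_2'$, which is in particular a link-homotopy. Combining with the link-homotopies from $T_i$ to $T_i'$ yields the required link-homotopy between $T_1$ and $T_2$.

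The main conceptual obstacle in this argument was overcome earlier, namely Theorem \ref{thm:1}: reducing arbitrary $2$--string links to ribbon ones up to link-homotopy. Once this ribbon reduction is available, the proposition follows as a direct corollary of the homotopy classification of ribbon $2$--string links from \cite{ABMW}. Note that there is no circularity with Remark \ref{rem:ribbonlh}, since only the easy direction---a self-circle crossing change is itself a link-homotopy---is used in the argument above.
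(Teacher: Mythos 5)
Your proof is correct and follows essentially the same route as the paper, which derives the proposition as an immediate consequence of Theorem \ref{thm:1}, Proposition \ref{prop:lh} and \cite[Thm.~2.34]{ABMW}. Your explicit treatment of surjectivity, injectivity, and the non-circularity with Remark \ref{rem:ribbonlh} simply spells out details the paper leaves implicit.
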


And as a direct corollary, we obtain:
\begin{theo}\label{thm:2}
Milnor $\mu^{(4)}$--invariants classify $2$--string links up to link-homotopy.
\end{theo}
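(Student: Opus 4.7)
The plan is to deduce Theorem \ref{thm:2} as an essentially immediate corollary of the two main ingredients built up in the preceding sections: the fact that the conjugating automorphism $\varphi$ gives a complete link-homotopy invariant of $2$--string links (Proposition \ref{prop:aut}), and the fact that the datum of $\varphi(T)$ and the datum of the non-repeated Milnor $\mu^{(4)}$--invariants of $T$ determine one another (Lemma \ref{lem:equiv_inv}).

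More concretely, I would proceed as follows. First, given two $2$--string links $T_1$ and $T_2$ with the same non-repeated Milnor $\mu^{(4)}$--invariants, Lemma \ref{lem:equiv_inv} implies $\varphi(T_1)=\varphi(T_2)$. By the injectivity part of Proposition \ref{prop:aut}, the map $\varphi$ descends to an \emph{injection} on link-homotopy classes, so $T_1$ and $T_2$ are link-homotopic. Conversely, if $T_1$ and $T_2$ are link-homotopic, then Proposition \ref{prop:lh} already gives that their Milnor invariants coincide (this is where the invariance under link-homotopy is used). Combining these two directions yields precisely the statement that $\mu^{(4)}$ classifies $2$--string links up to link-homotopy.

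The real work lies upstream, not in this final assembly. The genuinely delicate step is Proposition \ref{prop:aut}, whose nontrivial content is the surjectivity and injectivity of $\varphi$ on link-homotopy classes of arbitrary (not necessarily ribbon) $2$--string links. This in turn rests on Theorem \ref{thm:1}, which reduces the classification to the ribbon case, together with the homotopy classification of ribbon $2$--string links from \cite[Thm.~2.34]{ABMW}. The proof of Theorem \ref{thm:1} is the principal technical obstacle of the paper, relying on the singular Roseman calculus of Proposition \ref{prop:singularRosemanMoves} and Bartels--Teichner's triviality theorem. Once these are in hand, Theorem \ref{thm:2} reduces to reformulating the classification furnished by $\varphi$ in the language of $\mu^{(4)}$, which is exactly what Lemma \ref{lem:equiv_inv} accomplishes; hence the proof should consist of little more than citing Proposition \ref{prop:aut} and Lemma \ref{lem:equiv_inv}.
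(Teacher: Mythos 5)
Your proposal is correct and follows exactly the paper's route: Theorem \ref{thm:2} is stated there as a direct corollary of Proposition \ref{prop:aut}, with Lemma \ref{lem:equiv_inv} translating between the conjugating automorphism and the $\mu^{(4)}$--invariants, and Proposition \ref{prop:lh} supplying link-homotopy invariance. Your assessment that the real content lies upstream in Theorem \ref{thm:1} and the ribbon classification is also accurate.
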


\begin{remarque}\label{rem:milnor}
There are $\sum_{k=2}^n \binom{n}{k}k!$ Milnor homotopy invariants, which is the number of sequences without repetitions of length up to $n$ and at least $2$ ; these invariants, however, are not all independent.
But, as recalled in Section \ref{sec:codimension2} below, welded string links up to self-virtualization form a group which is isomorphic to that of $2$--string links up to link-homotopy, and it is shown in \cite[Thm.~9.4]{MY} using Arrow calculus that a given subset of $\sum_{k=2}^n \frac{n!}{(n-k)!(k-1)}$ of these invariants is sufficient to classify welded string links up to self-virtualization, and that any configuration of values for these numbers can be realized. In this sense, the group of $n$--component $2$--string links up to link-homotopy has rank
$\sum_{k=2}^n \frac{n!}{(n-k)!(k-1)}$, as announced in the introduction. 
This is to be compared to the rank of the group of $n$--component ($1$-dimentional) string links up to link-homotopy, which is 
$\sum_{k=2}^n \frac{n!}{(n-k)!k(k-1)}$, see \cite[\S 3]{HL}. 
\end{remarque}

\section{Link-homotopy classification in higher dimensions}\label{sec:codimension2}

The link-homotopy classification of ribbon $2$--string links, used above, was proved in \cite{ABMW} using \emph{welded} knot theory.
Loosely speaking, an $n$--component \emph{welded string link} is a proper immersion of $n$ oriented arcs, in a square with $n$ marked points 
on the top and bottom faces, such that the $i$th arc runs from the $i$th bottom to the $i$th top point, and such that singularities are transverse double 
points which are decorated either as a classical or a virtual crossing. 
These objects form a monoid $\wSL_n$ when regarded up to the usual moves of virtual knot theory \cite{Kauffman}, 
and the additional \emph{overcrossings commute} move, which allows an arc to pass \emph{over} a virtual crossing (passing under being still forbidden). 
 
S.~Satoh proved in \cite{Satoh} that there is a surjective $\Tube$ map from welded diagrams to ribbon surfaces in $4$--space, see also \cite{IndianPaper} for an alternative approach. 
We observe here that this remains actually true in higher dimensions, and 
that the classification of ribbon $2$--string links up to link-homotopy of \cite{ABMW} generalizes in higher dimensions; 
this emphasizes the fact that ribbon objects somehow stabilize at dimension $2\looparrowright4$. 
Since this fact is certainly well-known to experts, we will only outline the construction here.

For $k> 2$, a \emph{ribbon $k$--string link} is the natural higher-dimensional analogue of ribbon $2$--string links, 
{\it i.e.} the isotopy class of an embedding of copies of $S^{k-1}\times [0,1]$ in $D^{k+2}$, with similar boundary conditions, 
and bounding immersed $(k+1)$--balls which only intersect at ribbon singularities. 
Here, a ribbon singularity between two immersed $(k+1)$--balls $B$ and $B'$ is a $k$--ball whose preimages are two copies, 
one in the interior of, say, $B$ and the other with boundary embedded in $\partial B'$.
Ribbon $k$--string links with $n$ components form a monoid, denoted by $\rPS_n^k$.

Satoh's $\Tube$ map generalizes naturally to a map 
$$\Tube_k: \wSL_n\longrightarrow \rPS_n^k $$
as follows. 
For each classical crossings of a given welded string link $D$, pick two $(k+1)$--balls which share a unique ribbon singularity and are disjoint from all the other pairs. These two immersed balls should be thought of as $(k+1)$--dimensional incarnations of the two strands involved in the crossing of $D$, the ball having the preimage of the singularity in its interior corresponding to the overstrand. 
Next, it remains to connect these immersed balls to one another and to the boundary of $D^{k+2}$ by further disjointly embedded $(k+1)$--balls, 
as combinatorially prescribed by the diagram $D$. The boundary of the resulting immersed $(k+1)$--balls is the desired ribbon $k$--string link.
The key to the fact that this assignment yields a well-defined, surjective 
map is, roughly speaking, that in dimension $\ge 4$, 
ribbon knotted objects in codimension $2$ behave like framed $1$--dimensional objects. Consequently, an element of $\rPS_n^k$ is uniquely determined by the combinatorial interconnections of its ribbon singularities. 

There is a local operation on ribbon $k$--string links, defined as the deletion of a ribbon singularity by pushing it out of the immersed $(k+1)$--ball.  
This is a natural analogue, for ribbon knotted objects, of the usual crossing change operation. 
We call \emph{ribbon link-homotopy} the equivalence relation $\sim_h$ on ribbon $k$--string links 
generated by this local move applied on ribbon singularities which have both their preimages in the same connected component. 
Note that this allows to unknot any single component. \\
It is easily checked that the \emph{self-virtualization move} on welded string links, 
which replaces a classical crossing involving two strands of a same component by a virtual one, 
generates an equivalence relation $\sim_v$ such that the map $\Tube_k$ descends to a surjective map 
$$\Tube^h_k: \fract{\wSL_n} \sim_v\longrightarrow \fract{\rPS_n^k} \sim_h. $$

Now, there is a natural map $\varphi_k$ from $\fract{\rPS_n^k} \sim_h$ to the group $\rm{Aut}_C(RF_n)$ of conjugating automorphisms of the reduced free group, defined by a straightforward generalization of the construction given in Section \ref{sec:lhinvariants} for $k=2$: roughly speaking, this action expresses the 
``top meridians'' of a ribbon $k$--string link as conjugates of the ``bottom ones''.
Moreover, we also have a map $\varphi_w$ from $\fract{\wSL_n} \sim_v$ to $\rm{Aut}_C(RF_n)$, which is known to be an isomorphism \cite[Thm.~3.11]{ABMW}, 
and which is compatible with the previous map in the sense that 
\begin{equation}\label{eq:diagram}
 \varphi_k\circ \Tube^h_k = \varphi_w. 
\end{equation}
The point here is that the fundamental group of the exterior of a ribbon $k$--string link admits a Wirtinger-type presentation, 
with a conjugating relation given at each ribbon singularity, and that the Tube map acts faithfully on the peripheral system. 
This is shown in \cite[\S~3.3]{ABMW} for $k=2$, and remains true in higher dimensions, 
owing to the fact that there is a deformation retract of the ribbon-immersed $(k+1)$--balls bounded by an element of $\rPS_n^k$ 
to ribbon-immersed $3$--balls, bounded by an element of $\rPS_n^2$. 
Combining (\ref{eq:diagram}) with \cite[Thm.~3.11]{ABMW}, we thus obtain the following classification result: 
\begin{theo}\label{thm:autk}
  The map $\varphi_k$ induces a group isomorphism between ribbon link-homotopy classes of ribbon $k$--string links and $\rm{Aut}_C(RF_n)$. 
\end{theo}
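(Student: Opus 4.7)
The plan is to deduce the theorem as a formal consequence of the commutative triangle $\varphi_k \circ \Tube^h_k = \varphi_w$ recorded in (\ref{eq:diagram}), the surjectivity of $\Tube^h_k$, and the fact—already recalled in the text—that $\varphi_w$ is a group isomorphism by \cite[Thm.~3.11]{ABMW}. Before this formal argument can be carried out, one first needs to know that $\varphi_k$ is itself a well-defined monoid morphism on $\fract{\rPS_n^k}{\sim_h}$, and that is where the bulk of the technical work lies.

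I would therefore begin by constructing $\varphi_k$ on $\rPS_n^k$, closely following the construction of Section \ref{sec:lhinvariants}. Concretely, for a ribbon $k$-string link $T$ one writes down a Wirtinger-type presentation of $\pi_1(D^{k+2}\setminus T)$ by assigning a meridian to each elementary piece of the bounding ribbon $(k+1)$-balls and a conjugating relation at each ribbon singularity; the Stallings argument used in the $k=2$ case then shows that both the top and bottom boundary inclusions descend to isomorphisms onto $\RFn$, yielding a monoid homomorphism $\varphi_k\colon \rPS_n^k \to \mathrm{Aut}_C(\RFn)$ that sends each top meridian to a conjugate of the corresponding bottom one. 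Invariance under ribbon link-homotopy will then be a direct check: pushing a ribbon singularity whose two preimages both lie in the $i$-th component out of its containing $(k+1)$-ball multiplies the $i$-th longitude by a conjugate of an $i$-th meridian, and any such element is trivial in $\RFn$.

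Once $\varphi_k$ is set up, surjectivity is immediate, since $\varphi_w = \varphi_k \circ \Tube^h_k$ is surjective. For injectivity, given $T, T' \in \fract{\rPS_n^k}{\sim_h}$ with $\varphi_k(T) = \varphi_k(T')$, I would pick preimages $D, D'$ under the surjective map $\Tube^h_k$, observe that $\varphi_w(D) = \varphi_w(D')$ by commutativity of (\ref{eq:diagram}), and invoke injectivity of $\varphi_w$ to conclude that $D = D'$ in $\fract{\wSL_n}{\sim_v}$, whence $T = T'$. Since $\varphi_k$ is then a bijective monoid morphism into a group, $\fract{\rPS_n^k}{\sim_h}$ is automatically a group and $\varphi_k$ the announced group isomorphism.

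The hard part in this strategy is the preparatory verification of the Wirtinger-type presentation in arbitrary dimension $k\geq 2$. As the surrounding text indicates, the cleanest way to avoid repeating the fine analysis of \cite[\S~3.3]{ABMW} is to exhibit a deformation retract of the ribbon-immersed $(k+1)$-balls bounding any element of $\rPS_n^k$ onto ribbon-immersed $3$-balls, thereby reducing the higher-dimensional fundamental group computation to the known case $k=2$. This retraction is the single genuinely geometric input the argument requires; it encapsulates the stabilization of codimension-$2$ ribbon knotted objects at dimension $2\looparrowright 4$ emphasized in the introduction, and once granted, the rest of the proof is formal diagram chasing.
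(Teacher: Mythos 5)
Your proposal is correct and follows essentially the same route as the paper: the theorem is obtained formally from the identity $\varphi_k\circ \Tube^h_k=\varphi_w$, the surjectivity of $\Tube^h_k$, and the fact that $\varphi_w$ is an isomorphism by \cite[Thm.~3.11]{ABMW}, with the only genuine geometric input being the Wirtinger-type presentation and the deformation retract of the ribbon-immersed $(k+1)$--balls onto ribbon-immersed $3$--balls reducing everything to the case $k=2$. You have correctly identified both the formal diagram chase and the technical point the paper itself delegates to \cite[\S~3.3]{ABMW}.
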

\noindent As in Section \ref{sec:lhinvariants}, this statement can be reformulated in terms of higher-dimensional Milnor invariants without repetitions.  
\begin{remarque}
 Theorem \ref{thm:autk} can be promoted to a link-homotopy classification of ribbon $k$--tori, 
 {\it i.e.} of copies of $S^{k-1}\times S^1$ bounding ribbon $(k+1)$--dimensional solid tori. 
This is done using the natural closure operation from $k$--string links to $k$--tori, and the Habegger-Lin classification scheme of \cite{HL2}, as in  \cite[\S~2.4]{ABMW} which treats the case $k=2$. 
\end{remarque}

\begin{remarque}
It is natural to ask whether one can remove the ribbon assumption in the classification Theorem \ref{thm:autk}, 
as done for $k=2$ in the present paper. 
Recall from the introduction that Bartels--Teichner's theorem \cite[Thm.1]{BT}, which is one of the keys of our proof, holds in any dimension; 
we expect that this fact could be used to attack this question. 
\end{remarque}

\appendix

\section{Non surjectivity of the braid-closure map}
\label{app:NonSurjectivity}

The braid-closure of a $1$--component $2$--string link can be seen seen as
a knotted sphere with a $1$--handle added. 
But not any knotted torus can be obtained in this way, 
\emph{i.e.} the braid-closure map is not surjective; as a matter of fact, J. Boyle
already noticed in the last paragraph of \cite[\S~4]{Boyle} that the
``$1$--turned trefoil'' $2$--torus knot 
is not the closure of a $2$--string link. 
Up to (link-)homotopy, this $2$--torus knot is however trivial, and hence is the closure of
the trivial $2$--string link. We shall now prove that, even up to link-homotopy, the braid-closure map is not surjective.

We first define an invariant for knotted surfaces as follows. 
Let $D=D_1\sqcup\cdots\sqcup D_n$ be a broken surface diagram for an $n$--component surface--link $\mathcal L$, possibly immersed with a finite number 
of singular points for which both preimages are on the same connected component.
For each $j\in\{1,\ldots,n\}$ we denote by $T_{\!j}$ the abstract surface which lives above the $j$th component of $\mathcal L$. 
For each $i\neq j$, define $\Gamma_{\!i,j}\in H_1(T_{\!j};\Z_2)$ as the homology class
\[
\Gamma_{\!i,j}:=\sum_{\gamma\in \textrm{Dbl}^+_{i,j}}\big[\varphi_j^{-1}(\gamma)\big],
\]
where $\textrm{Dbl}^+_{i,j}$ is the set of circular loci of double points between $D_i$ and $D_j$ for which the $i$th component of $\mathcal L$ stands 
above the $j$th one, according to the projection on $D$, where 
$\varphi_j:T_{\!j}\to\R^3$ is the parametrization of the $j$th component of $\mathcal L$ composed with the projection to $D$, 
and where $[\ .\ ]$ stands for the homology class.

\begin{remarque}
  Considering $\Z_2$--coefficients for $H_1(T_{\!j})$ is enforced by the fact that elements of $\textrm{Dbl}^+_{i,j}$ are not naturally oriented.
\end{remarque}

\begin{prop}\label{prop:Invariance}
  For every $i\neq j\in\{1,\ldots,n\}$, $\Gamma_{\!i,j}$ depends only on the link-homotopy class of $\mathcal L$.
\end{prop}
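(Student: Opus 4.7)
The plan is to invoke Proposition~\ref{prop:singularRosemanMoves}, which reduces the claim to showing that $\Gamma_{i,j}$ is preserved under each of the Roseman moves (a)--(g) and each of the singular Roseman self-moves (h)--(j).

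The singular Roseman self-moves are the easy part: by definition, the singular points appearing in such a move have both preimages in the same component of $\mathcal{L}$, so the set $\textrm{Dbl}^+_{i,j}$ for $i\neq j$ is affected only by the ambient isotopy of the regular strata of the diagram within the supporting $3$-ball. Such an isotopy moves the circular loci on $T_j$ by an ambient isotopy that is constant outside a disk, and hence does not change their $\Z_2$-homology class.

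For each Roseman move (a)--(g), the support is a $3$-ball $B\subset \R^3$ whose preimage $\varphi_j^{-1}(B)\subset T_j$ is a disjoint union of small open disks. Any change in $\sum_{\gamma\in\textrm{Dbl}^+_{i,j}}[\varphi_j^{-1}(\gamma)]$ is therefore concentrated in these disks, so it suffices to verify in each case that the difference before and after the move bounds a $2$-chain in $T_j$ modulo~$2$. For the simpler moves (a), (b), (c) this is immediate: either the affected loci lie outside $\textrm{Dbl}^+_{i,j}$, or a pair of adjacent parallel arcs is created or destroyed on $T_j$, or a single locus is modified by an isotopy inside a disk---in each subcase the class is preserved.

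The main obstacle will be the triple-point-related moves (d), (e), (f), (g), where three sheets meet in a prescribed height order and the move can swap which sheet is \emph{above} along arcs of double points, possibly reshuffling segments between $\textrm{Dbl}^+_{i,j}$ and its complement. The approach will be, for each such move, to enumerate how the three (or four) local sheets are distributed among $D_i$, $D_j$ and other components, and to check that in every configuration the symmetric difference of $\varphi_j^{-1}(\textrm{Dbl}^+_{i,j})$ consists of finitely many arcs with matching endpoints in the disks $\varphi_j^{-1}(B)$, so it bounds a $2$-chain on $T_j$. The $\Z_2$-coefficients collapse the many subcases to this single null-homology statement, yielding the invariance of $\Gamma_{i,j}$ under link-homotopy.
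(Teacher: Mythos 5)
Your proposal is correct and follows the paper's top-level strategy exactly: invoke Proposition \ref{prop:singularRosemanMoves} and check invariance of $\Gamma_{\!i,j}$ move by move. Where you differ is in how the moves are dispatched. The paper gives a short ad hoc reason for each move (moves (a) and (e) create or destroy a locus with trivial homology class, (b) is a band sum, (c), (d), (i), (j) involve a single component, (f) adds a kink, (g) and (h) visibly preserve $\textrm{Dbl}^+_{i,j}$), whereas you isolate a single localization principle: the move is supported in a $3$--ball $B$ whose preimage in $T_{\!j}$ is a disjoint union of disks; since membership in $\textrm{Dbl}^+_{i,j}$ is determined pointwise by the over/under data, the before and after chains agree outside $\varphi_j^{-1}(B)$, so their mod~$2$ difference is a $1$--cycle supported in a union of disks and hence null-homologous. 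This principle, which you state when treating moves (a)--(c), in fact disposes of \emph{all} the moves uniformly, so the case enumeration you announce for the triple-point moves (d)--(g) is unnecessary; moreover your concern that such a move might ``swap which sheet is above'' along an arc of double points is unfounded, as Roseman moves are isotopies in $4$--space and never reverse the over/under data of a pre-existing double point arc. The one hypothesis worth making explicit in your argument is that the over/under information is constant along each circular locus between $D_i$ and $D_j$ (for $i\neq j$ such a locus meets no branch point and, under link-homotopy, no singular point), which is what makes membership in $\textrm{Dbl}^+_{i,j}$ a local condition and guarantees the agreement of the two chains outside $\varphi_j^{-1}(B)$.
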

\begin{proof}
  This is checked using Proposition
  \ref{prop:singularRosemanMoves}. Roseman moves (a) and (e) may
  introduce or remove a component in $\textrm{Dbl}^+_{i,j}$, but with
  a trivial homology class. The action of Roseman move (b) on
  $\Gamma_{\!i,j}$ is a band sum which does not change its homology class. 
  Roseman moves (c) and (d) and singular Roseman
  self-moves (i) and (j) preserve $\Gamma_{\!i,j}$ since they involve
  only a single connected component. Roseman move (f) may only add or remove a trivial kink in a component of $\textrm{Dbl}^+_{i,j}$. 
  Finally, Roseman move (g) and singular Roseman self-move (h) obviously preserve $\textrm{Dbl}^+_{i,j}$.
\end{proof}

\begin{remarque}
  The invariant $\Gamma_{\!i,j}$ can alternatively be defined by summing over $\textrm{Dbl}^-_{i,j}$, 
  the set of circular loci where the $i$th component is below the $j$th one. 
  Indeed, this similarly defines an invariant of link-homotopy, 
  and taking their sum corresponds to summing over all circular loci in $D_i\cap D_j$, regardless of the over/under information. 
  But one can check, using Proposition \ref{prop:singularRosemanMoves}, that this sum is invariant under general homotopy;  
  since $D_i$ and $D_j$ can be pulled appart up to homotopy, the sum vanishes.
\end{remarque}

\begin{lemme}\label{lem:Obstruction}
  If $\mathcal L$ is a $2$--torus link which is the braid-closure of a $2$--string link, then for any $j\in\{1,\ldots,n\}$, 
  the set $\big\{\Gamma_{\!i,j}\ |\ i\in\{1,\ldots,n\}\setminus\{j\}\big\}$ cannot contain two distinct non-zero elements. 
\end{lemme}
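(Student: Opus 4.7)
My plan is to exploit the product-like structure of the braid-closure to constrain the possible classes of $\Gamma_{\!i,j}$. Since $\mathcal{L}$ is the braid-closure of a $2$--string link $T\subset B^4$, each torus $T_{\!j}$ decomposes as $T_{\!j} = A_j\cup A'_j$, where $A_j$ is the $j$th component of $T$ (an annulus parametrized by $S^1\times[0,1]$) and $A'_j$ is the corresponding closure annulus sitting in the complementary $4$--ball $S^4\setminus B^4$; the two subannuli meet along $C_j\times\{0\}$ and $C_j\times\{1\}$. Crucially, $T' := \sqcup_j A'_j$ is a trivial $2$--string link in $S^4\setminus B^4$.

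The first step is to produce a broken surface diagram for $\mathcal{L}$ in which the closure annuli behave trivially. By Proposition \ref{prop:Invariance}, I may compute $\Gamma_{\!i,j}$ on any diagram of $\mathcal{L}$, so I can first ambient-isotope $T'$ into its standard product form in the complementary $4$--ball, and then pick a projection $\R^4\to\R^3$ adapted to this form. The outcome will be a diagram in which each $A'_j$ projects to an embedded annulus in $\R^3$, these $n$ annuli are pairwise disjoint in $\R^3$, and they are all disjoint from the projection of $T$. In such a diagram, every circular double-point locus between two distinct components of $\mathcal{L}$ lies entirely inside the projection of $T$.

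The second step is a homological observation. Fix $i\neq j$ and $\gamma\in\textrm{Dbl}^+_{i,j}$: by the previous step, $\varphi_j^{-1}(\gamma)$ is a closed curve contained in the subannulus $A_j\subset T_{\!j}$. Since $A_j\cong S^1\times[0,1]$, any simple closed curve on $A_j$ is either null-homotopic or isotopic to a parallel copy of the core circle $C_j$, so the class $[\varphi_j^{-1}(\gamma)]$ lies in $\{0,[C_j]\}\subset H_1(T_{\!j};\Z_2)$. Summing over $\gamma\in\textrm{Dbl}^+_{i,j}$ gives $\Gamma_{\!i,j}\in\{0,[C_j]\}$ for every $i\neq j$, so the set $\{\Gamma_{\!i,j}\mid i\neq j\}$ is contained in a two-element subset of $H_1(T_{\!j};\Z_2)$ and hence cannot contain two distinct non-zero elements.

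The main obstacle I anticipate is the diagrammatic normalization in the first step: one must justify that the closure annuli can indeed be simultaneously put in a trivially projecting position while staying disjoint from the inner part $T$ in a single diagram. This should follow from the triviality of $T'$ in the complementary $4$--ball---the $A'_j$'s form a trivial $2$--string link, so they admit a simultaneous unknotting and separation from $T$ in $S^4$, after which a projection direction respecting the product structure of $T'$ yields the required picture.
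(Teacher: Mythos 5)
Your proposal is correct and follows essentially the same route as the paper's proof: both reduce to a diagram in which the closure annuli are in standard trivial position and disjoint from the singular locus, so that every circle of $\textrm{Dbl}^+_{i,j}$ lies on the inner annulus $A_j$ and therefore has homology class $0$ or $[C_j]$ in $H_1(T_{\!j};\Z_2)$. The only difference is one of emphasis---you spell out the normalization of the closure annuli via Proposition \ref{prop:Invariance}, which the paper takes for granted by starting from a diagram ``closed with trivial $1$--handles''.
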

\begin{proof}
  Consider $D$ a broken surface diagram for the $2$--string link closed with trivial $1$--handles. 
  Since the singular locus of $D$ does not meet the 1--handles, every element of $\textrm{Dbl}^+_{i,j}$ is necessarily parallel 
  to a multiple (possibly null) of the top boundary $\partial_{\!j}^1$ of the $j$th component of the $2$--string link; 
  the homology classes $\Gamma_{\!i,j}$ are hence either null or equal to $\big[\varphi_j^{-1}(\partial_{\!j}^1)\big]\in H_1(T_{\!j};\Z_2)$.
\end{proof}

\begin{prop}
  The braid-closure map is not surjective.
\end{prop}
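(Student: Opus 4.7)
The plan is to exhibit a specific $3$--component $2$--torus link $\mathcal{L} = L_1 \cup L_2 \cup L_3$ that violates the necessary condition of Lemma \ref{lem:Obstruction}. Namely, I will construct $\mathcal{L}$ so that the link-homotopy invariants $\Gamma_{2,1}$ and $\Gamma_{3,1}$ from Proposition \ref{prop:Invariance} represent two distinct non-zero classes of $H_1(T_1;\Z_2)$, which will immediately imply that $\mathcal{L}$ is not link-homotopic to any braid-closure, proving the non-surjectivity of the braid-closure map up to link-homotopy.

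Take $L_1$ to be a standardly embedded $2$--torus in $\R^4$, so that $H_1(T_1;\Z_2) \cong \Z_2 \oplus \Z_2$ is generated by the meridian class $[m]$ and the longitude class $[\ell]$. The idea is then to construct $L_2$ and $L_3$ so that, in a well-chosen broken surface diagram, the set $\textrm{Dbl}^+_{2,1}$ consists of a single circle of regular double points lifting through $\varphi_1$ to a representative of $[m]$, and symmetrically $\textrm{Dbl}^+_{3,1}$ consists of a single such circle lifting to $[\ell]$. Concretely, such tori can be produced by spinning, inside an $\R^3$--slice of $\R^4$, configurations that link the meridian or longitude generators of $L_1$, and then lifting them appropriately in the fourth coordinate so as to create a single circle of double points of the prescribed type with $L_2$ (resp. $L_3$) over $L_1$.

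By Proposition \ref{prop:Invariance}, the resulting classes $\Gamma_{2,1} = [m]$ and $\Gamma_{3,1} = [\ell]$ are invariants of the link-homotopy class of $\mathcal{L}$. Since $[m] \neq [\ell]$ and both are non-zero in $H_1(T_1;\Z_2)$, the set $\{\Gamma_{2,1}, \Gamma_{3,1}\}$ contains two distinct non-zero elements, and Lemma \ref{lem:Obstruction} then excludes $\mathcal{L}$ from being link-homotopic to any braid-closure.

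The main obstacle is the explicit geometric realization of the construction: one must verify that the projections of $L_2$ and $L_3$ really create single circles of double points lifting to the prescribed homology classes, and not pairs of double point circles of the same class that would cancel in $\Z_2$. Naive piercing constructions (such as pushing a small torus through a disk bounded by a generator of $L_1$) tend to yield an even number of parallel circles. To get around this, one should use a local model in which $L_i$ (for $i=2,3$) interacts with $L_1$ only along a single essential loop, for instance by tubing together an entry and exit circle via an arc that lies entirely on one side of $L_1$ in the projection, so that only one of them survives as an element of $\textrm{Dbl}^+_{i,1}$.
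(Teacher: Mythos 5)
Your overall strategy is exactly the paper's: produce a $3$--component $2$--torus link for which two of the classes $\Gamma_{\!i,j}$, for a fixed $j$, are distinct and non-zero, and conclude by Proposition \ref{prop:Invariance} and Lemma \ref{lem:Obstruction}. The reduction is applied correctly. The problem is that the entire mathematical content of this proposition is the \emph{existence} of such an example, and that is precisely the step you do not carry out: you assert that tori $L_2$, $L_3$ with $\Gamma_{\!2,1}=[m]$ and $\Gamma_{\!3,1}=[\ell]$ ``can be produced by spinning \dots and then lifting them appropriately,'' and then you yourself flag the verification as ``the main obstacle,'' offering only a heuristic fix. As written, this is a genuine gap, and it is not a cosmetic one: the projection of $L_2$ is a closed surface in $\R^3$, hence separates it, so the relevant curve on (the projection of) $L_1$ meets it in an \emph{even} number of circles; your worry about cancelling pairs in $\Z_2$ is therefore not a pathology of ``naive piercing constructions'' but an unavoidable feature, and your proposed remedy (tubing entry and exit circles together on one side) is not shown to produce an embedded torus in $\R^4$ with the stated double-point set.

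The gap is fillable, and in two ways. First, your example can be salvaged by observing that $\Gamma_{\!i,j}$ only sums over $\textrm{Dbl}^+_{i,j}$, the circles where component $i$ lies \emph{over} component $j$: one arranges the fourth coordinate so that of the two forced intersection circles, one has $L_i$ over $L_1$ and the other has $L_i$ under $L_1$, so that exactly one survives in $\textrm{Dbl}^+_{i,1}$; this is consistent with disjointness in $\R^4$ since the projections meet only along those circles. (This is essentially the $4$--dimensional clasp you are groping for, but it needs to be said and checked.) Second, the paper sidesteps the issue entirely by spinning an explicit $3$--component link, with component $1$ making a full turn around component $3$ during the spin; the spinning construction makes the double-point circles and their classes ($\Gamma_{\!1,3}=a+b$ and $\Gamma_{\!2,3}=b$ in $H_1(T_3;\Z_2)$) directly computable, with no separation issue to confront. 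Either route completes your argument; without one of them, the proof is a plan rather than a proof.
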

\vspace{-.1cm}
  \hspace{-.43cm}\parbox[l]{12.43cm}{\begin{proof} 
  Let $D_0$ be the broken surface diagram obtained as
  follows. Consider the $3$--component link depicted on the right and make it
  spin around a line which is disjoint from it; while spinning, make 
  component $1$ run a full turn around component $3$. 
  It is easily computed that $\Gamma_{13}=a+b$ and $\Gamma_{23}=b$,
  where $a$ is the cycle represented by $3$ in the $3$--component link, 
  and $b$ is the cycle obtained by spinning any point of $3$. It follows by Lemma \ref{lem:Obstruction} that $D_0$ describes a $2$--torus link 
  which is not the braid-closure of any $2$--string link.
  \end{proof}}
  \parbox[c]{2cm}{
  \includegraphics{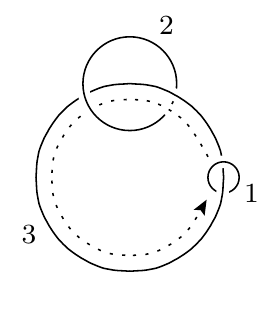}}

\begin{remarque}
  Since it is in general difficult to distinguish between the different non-trivial values for $\Gamma_{ij}$, 
  one may consider the $\Z_2$--valued invariant which only indicates whether $\Gamma_{ij}$ is zero or not. 
  For ribbon $2$--torus links, it is easily checked that this invariant coincides with the mod 2 reduction of either 
  the virtual linking number or the asymmetric linking number.
\end{remarque}

\bibliographystyle{abbrv}
\bibliography{wSL}

\end{document}